\newcommand{\set}[1]{\left\{#1\right\}}
\renewcommand{\d}{\ensuremath{\mathrm{d}}}
\newtheorem{theorem}{Theorem}
\newtheorem{lemma}[theorem]{Lemma}
\def\kpa2_2{\frac{\kappa^2}{2}}
\def\sigm2_2{\frac{\sigma^2}{2}}
\def\nab_dot{\nabla\cdot}
\def\Pj{\mathbf{P}}
\def\d{\rm{d}}
\def\1/N{\frac{1}{N}}
\def\H_1_u{H_u^{-1}}
\def\f12{\frac{1}{2}}
\newcommand{\mbx}{{\mathbf x}}
\begin{document}
 
\begin{center}
{\bf \Large Projection Method for Steady states of Cahn-Hilliard Equation with the Dynamic Boundary Condition}

\

Shuting Gu \footnote{ College of Big Data and Internet, Shenzhen Technology University, Shenzhen 518118, China. Email: gushuting@sztu.edu.cn } 
and Ming Xiao \footnote{
School of Science, Beijing University of Posts and Telecommunications, Beijing 100876, China. Email: xiaoming@bupt.edu.cn
}
and Rui Chen\footnote{ Corresponding author. School of Science,Key Laboratory of Mathematics and Information Networks (Ministry of Education), Beijing University of Posts and Telecommunications, Beijing 100876, China. Email: ruichen@bupt.edu.cn}

\end{center}
 \date{\today}

\section*{Abstract}

The Cahn-Hilliard equation is a fundamental model that describes phase separation processes of two-phase flows or binary mixtures. In recent years, the dynamic boundary conditions for the Cahn-Hilliard equation have been proposed and analyzed.
 Our first goal in this article is to present a projection method to locate the steady state of the CH equation with dynamic boundary conditions. The main feature of this method is that it only uses the variational derivative in the metric $L^2$ and not that  in the metric $H^{-1}$, thus significantly reducing the computational cost. In addition, the projected dynamics fulfill the important physical properties: mass conservation and energy dissipation. In the temporal construction of the numerical schemes, the convex splitting method is used to ensure a large time step size. Numerical experiments for the two-dimensional Ginzburg-Landau free energy, where the surface potential is the double well potential or the moving contact line potential, are conducted to demonstrate the effectiveness of this projection method.

\

{{\bf Keywords}: Chan-Hilliard equation, dynamic boundary condition, projection method, convex splitting scheme }
\

{{\bf  Mathematics Subject Classification (2010)}  Primary 65K05, Secondary  82B05	  }


\section{Introduction}
   
  
  
  The Cahn-Hilliard (CH) equation, first proposed in \cite{CH-EQ}, plays a crucial role in describing the phase separation process of the binary mixtures and the coarsening of a binary liquid in  the   porous Brinkman medium \cite{etde_21417454}. It is also widely used in many complicated moving interface problems in materials science and fluid dynamics through a phase field approach, for example, \cite{ShenYang, Fife, Bates, Brachet}.
  
   In recent years, many researchers have focused on studying the CH equation with different types of dynamic boundary conditions\cite{Chen2018a,Ma2017} to consider  possible interactions of the material with the solid wall at close range.
   And various corresponding models have been developed. For example, Goldstein et al. in 2011 \cite{Goldstein2011} proposed the CH equation with the dynamic boundary condition (called "GMS" model)  and proved the energy dissipation law and mass conservation; In 2019, Liu and Wu proposed a new dynamic boundary condition for the CH equation (the so-called "Liu-Wu" model), which assumed that there is no mass exchange between the bulk and the boundary; In 2021, Knopf et al. proposed another type of  dynamic boundary condition with a relaxation parameter $\gamma$.  It can be viewed as an interpolation between the GMS model and the Liu-Wu model. This model relaxes to the Liu-Wu model when $\gamma$ tends to infinity and relaxes to the GMS model when $\gamma$ tends to zero. In this article, we are also interested in the CH equation with the dynamic boundary condition and we start the discussion by recalling the Liu-Wu model \cite{Liu2019} as follows:   
\begin{subnumcases}{\label{Liu-Wu_model}}
   	\phi_t= -\gamma_1 \Delta \mu,  & \text { in } $\Omega \times(0, T)$, \label{equ:CH_1} \\
  \mu=  - \kappa^2 \Delta \phi+ f(\phi),  & \text { in }$ \Omega \times(0, T)$, \label{equ:CH_2} \\
  \partial_\mathbf{n} \mu = 0, &  \text { in } $\Omega \times(0, T)$, \label{equ:CH_3} \\
  \left.\phi\right|_{\Gamma}=\psi,  & \text { on } $ \Gamma \times(0, T)$, \label{equ:CH_4} \\
    \psi_t= -\gamma_2 \Delta_\Gamma\mu_{\Gamma},  & \text { on } $\Gamma \times(0, T)$, \label{equ:CH_5} \\
    \mu_{\Gamma} = - \kappa^2 \Delta_{\Gamma} \psi + g(\psi) + \partial_{\mathbf{n}} \phi,  & \text { on } $ \Gamma \times(0, T)$. \label{equ:CH_6}
\end{subnumcases}
Here, $\phi$ represents the order parameter, $T$ is a finite time, and $\Omega = \mathbb{R}^d$ is the bounded region with the boundary $\Gamma$, The vector $\mathbf{n}$ denotes the unit normal to $\Gamma$. The constants $\gamma_1>0, \gamma_2 > 0$ are relaxation parameters. The operator $\Delta_\Gamma$ represents the Laplace-Beltrami operator on $\Gamma$. The function $f(\phi) = F^\prime(\phi) = \phi(\phi^2-1)$ defines the derivative of the volume potential $F(\phi)$, while $g(\psi) = G^\prime(\psi)$ defines the derivative of the surface potential $G(\psi)$. The variables $\mu$ and $\mu_\Gamma$ represent the chemical potential in the bulk and on the boundary, respectively.

The detailed numerical analysis of weak and strong solutions for the Liu-Wu model has been established in \cite{Liu2019} and constructed using the gradient flow approach in \cite{experiment1}. Various numerical methods have been developed for approximating the CH equation with dynamic boundary conditions, including the stabilized factor method \cite{Bao2021a,Meng2023,Bao2021}, the finite element method \cite{Knopf2021, experiment1}, the SAV method \cite{Metzger2023} and other approaches \cite{Liu2019,Metzger2023,Bao2021,Bao2021a,Knopf2021,
Meng2023,Knopf2021a,Cherfils2010,Cherfils2014,Fukao2017,Israel2014}. These methods enable the construction of first-order or second-order linear and energy-stable numerical schemes. A common characteristic of these methods is that they solve the fourth-order CH equation directly. However,
this leads to high  computational costs due to the  fourth-order spatial derivative.
It is well known that the CH equation represents the gradient flow of  the Ginzburg-Landau free energy $E(\phi)$ in the $H^{-1}$ metric:
\begin{equation}\label{CH_eq}
	\frac{\partial\phi}{\partial t} = \Delta \delta_\phi E.
\end{equation}
If we consider the gradient flow in the $L^2$ metric, we obtain the Allen-Cahn (AC) equation, which involves a second-order spatial derivative:
 \begin{equation}\label{AC_eq}
 \frac{\partial\phi}{\partial t} = -\delta_\phi E,
 \end{equation}
 where $\delta_\phi E$ denotes the first-order variational derivative of the functional $E$ in the $L^2$ sense. 
   These two gradient flows exhibit distinct dynamics and properties. The primary difference is that the CH equation conserves mass, i.e.,  $\int_\Omega \phi \, \d \mbx$ remains constant,  while the AC equation does not. Based on this difference, 
a natural approach for handling the CH equation in the $H^{-1}$ metric is to incorporate a mass-conservation constraint into the AC equation in the $L^2$ metric.
 In \cite{Chen2024}, Chen et al.  implemented this idea by introducing penalty terms or Lagrangian multipliers into the AC equation to ensure the mass conservation. This approach effectively reduces the original higher-order equation to a lower-order one while preserving mass conservation. However, it also transforms the problem into a constrained one. In general, solving a constrained problem tends to incur higher  computational costs compared to  solving an unconstrained one.

 In this note, our goal is also to find the steady states of the fourth order CH equation with dynamic boundary conditions by reducing it to a second-order equation while preserving mass conservation. However, our primary motivation is distinctly different: mass conservation is imposed through a projection operator $\Pj$.
 This operator is an orthogonal projection onto a constrained linear subspace that inherently satisfies the mass conservation condition. For instance, when applied to the inner region of the domain, the projection operator $\Pj$ is defined as:
$\Pj u:= u - \frac{1}{|\Omega|}\int_\Omega u(\mathbf{x})\, \d \mathbf{x}$.
Thus, the projected AC equation becomes
 \begin{equation}\label{ProAC}
 	\frac{\partial u}{\partial t} = \Pj (-\delta_u F ).
 \end{equation}
 It is easy to verify that 
 both the  CH equation \eqref{CH_eq} and the projected AC equation \eqref{ProAC}
 preserve mass, although their  gradient-descent trajectories differ \cite{Tiejun_1D}. Morever,
  \eqref{CH_eq} and  \eqref{ProAC} share   the same stationary points (metastable states) and the saddle points, provided they initially have the same mass \cite{WEI1998459,Fife2000,ProjIMF, Fife, Bates,Tiejun2D, Tiejun_1D}. This can be easily demonstrated -- see the proof in \cite{ProjIMF}, where this relationship was used to locate the saddle points of the CH equation through a saddle point search method. 
 This idea was mentioned in \cite{LLinProj2010} as a counterpart to the CH equation for exploring different phases in diblock copolymers. 
 \cite{Tiejun2D}  further compared the stochastic models 
 arising from these two dynamics (\eqref{CH_eq} and \eqref{ProAC})  in the context of noise-induced transitions.  However,
 it is important to remind readers that the true dynamics of the CH equation \eqref{CH_eq} differ from those of the projected Allen-Cahn equation.
 
 The key novelty of this paper lies in introducing the projection operator to solve the CH equation with dynamic boundary conditions. This approach not only transforms the problem into a lower order differential equation, significantly reducing computational costs, but also avoids tackling a challenging constrained problem. 
 Instead, We only need to apply the orthogonal projection to enforce mass conservation.   Furthermore, in the time discretization process,  we adopt the convex splitting method to develop an unconditional energy-stable finite difference scheme, which allows for large time steps.
 In the numerical experiments, we demonstrate the effectiveness of this new method using various initial states and examine two cases  for the surface potential $G(\psi)$: the double-well potential and the moving contact line problem. 

  The paper is organized as follows. In Section \ref{Proj_AC}, we introduce the projected Allen-Cahn equation with dynamic boundary conditions and prove both energy dissipation and mass conservation. Section \ref{Scheme} presents the time-discrete numerical scheme, constructed using the convex splitting method. In section \ref{Num_ex}, we show four  numerical experiments with varying initial conditions and different surface potential $G(\phi)$. Finally,  the conclusions are discussed in Section \ref{Conclusion}.


  \section{Projected AC eq. with dynamic boundary condition}\label{Proj_AC}

   
We  consider the total free energy of the system, which includes both  bulk energy and surface energy:
  \begin{equation}\label{E_total}
  	E^{\text{total}}(\phi, \psi) = E^{\text{bulk}}(\phi) + E^{\text{surf}}(\psi),
  \end{equation}
  where
  \begin{align}
  	E^{\text{bulk}}(\phi) & = \int_\Omega \frac{\kappa^2}{2}|\nabla \phi|^2 + F(\phi \, {\d} {\bf{x}},\label{E_bulk}\\
  	E^{\text{surf}}(\psi)  & = \int_{\Gamma}  \frac{\kappa^2}{2}|\nabla_\Gamma \psi|^2 + G(\psi) \, {\d} S,\label{E_surf}
  \end{align}
  where $\kappa$ represents the width of the interface.  The bulk potential is given by $F(\phi) = \frac{1}{4}(\phi^2-1)^2$, while $G(\phi)$ is the surface potential, which can be either a double-well potential $G(\psi) = \frac{1}{4}(\psi^2-1)^2$ or a moving contact line potential $G(\phi) =  -\frac{\tilde\gamma}{2} \cos(\theta_s) \sin(\frac{\pi}{2}\phi)$. 
  The operator $\nabla_\Gamma$ denotes the tangential gradient operator on the boundary $\Gamma$.  

  The gradient flow of $E^{\text{total}}$ in the $H^{-1}$ metric corresponds exactly  to the Liu-Wu model \eqref{Liu-Wu_model} with dynamic boundary conditions. Our interest lies in finding the steady states of this equation. In this section,  we address the problem by introducing a projected Allen-Cahn equation with a corresponding projected dynamic boundary condition. 
 
  Since mass is conserved in the gradient flow under $H^{-1}$ metric, we define the projection operators $\Pj_1$ and $\Pj_2$ as follows:
  \begin{equation}\label{Pj}
  	\Pj_1 u:= u - \frac{1}{|\Omega|}\int_\Omega u({\bf{x}})\, {\d} {\bf{x}}, \quad \Pj_2 u := u - \frac{1}{|\Gamma|}\int_\Gamma u({\bf{x}})\, {\d} S,
  \end{equation}
  onto the linear subspaces
  $$H_1=\set{u \in L^2(\Omega) : \int_\Omega u({\bf{x}}) \d {\bf{x}} =0} \text{and} ~~  H_2=\set{u \in L^2(\Gamma) : \int_\Gamma u({\bf{x}}) {\d} S =0},$$
  respectively.  
  Let $\langle \cdot, \cdot \rangle$ denote the $L^2$ inner product.
  It is straightforward to show that $\Pj_i$ satisfies the following properties, for $i=1,2,$
  \begin{enumerate}
  	\item $\Pj_i^2 = \Pj_i$;
  	\item $\Pj_i u \in H_i, \forall u \in L^2$;
  	\item $\Pj_i v = v, \forall v \in H_i$;
  	\item  $ \langle \Pj_i u, u \rangle \geq 0,  \forall u \in L^2 $. 
  \end{enumerate}
The first three properties are straightforward and can be found in \cite{ProjIMF} for a detailed proof. As for the last property, we only give the simple proof for $i=1$. In fact, $ \forall u \in L^2(\Omega), $ we have
\begin{align*}
    \langle \Pj_1 u, u \rangle = & \int_\Omega \Pj_1 u \cdot u ~ \d \mathbf{x} \\
    = &  \int_\Omega \left( u - \frac{1}{|\Omega|}\int_\Omega u({\mathbf{x}})\, \d {\mathbf{x}} \right ) u({\bf{x}}) \d {\bf{x}}\\
    = &  \int_\Omega u^2 {\d} {\bf{x}} - \frac{1}{|\Omega|} \left(\int_\Omega u({\bf{x}}) \d{\bf{x}} \right)^2 \\
    = &  \frac{1}{|\Omega|} \left( \int_\Omega u^2 {\d} {\bf{x}} \cdot \int_\Omega 1^2 {\d}{\bf{x}} - \big(\int_\Omega u({\bf{x}}) \cdot 1 ~ \d {\bf{x}} \big)^2  \right) \geq 0,
\end{align*}
by the Cauchy-Schwarz inequality.
  
  Similarly to the equivalence of the steady states between the CH equation \eqref{CH_eq} and the projected AC equation \eqref{ProAC}, all the terms computed in the $H^{-1}$ metric in the Liu-Wu model \eqref{Liu-Wu_model} can be transformed into corresponding terms in $L^2$ metric by projecting onto the confined subspace $H_0$. Consequently, the projected Allen-Cahn(Proj-AC) equation, along with the projected Allen-Cahn type dynamic boundary condition, can be derived as follows:
  \begin{subnumcases}{\label{Proj_Liu-Wu}}
	 \phi_t= -\gamma_1 \Pj_1 \mu, & \text { in } $ \Omega \times(0, T),$ \label{equ:ProjAC_bulk} \\
 	\mu=  - \kappa^2 \Delta \phi+ f(\phi), & \text { in } $\Omega \times(0, T), $ \\
	\partial_\mathbf{n} \mu = 0,  & \text { in } $ \Omega \times(0, T), $ \\
	\left.\phi\right|_{\Gamma}=\psi, & \text { on } $ \Gamma \times(0, T), $ \label{phi_boundary}\\
	\psi_t= -\gamma_2 \Pj_2 \mu_{\Gamma}, & \text { on } $ \Gamma \times(0, T),$ \label{equ:ProjAC_surf} \\
	\mu_{\Gamma}=- \kappa^2 \Delta_{\Gamma} \psi+g(\psi) + \kappa^2 \partial_{\mathbf{n}} \phi, & \text { on } $ \Gamma \times(0, T), $
	\end{subnumcases}
 where $f(\phi) = \phi^3-\phi$ and  $g(\psi) = G^\prime(\phi)$, 
   it is clear that the projected system \eqref{Proj_Liu-Wu} reduces the spatial derivative order by two compared to the original Liu-Wu model \eqref{Liu-Wu_model}, which operates directly as a gradient flow in the $H^{-1}$ metric. Moreover, it can be easily demonstrated  that the continuous model \eqref{Proj_Liu-Wu} preserves mass and ensures energy stability,  as formalized in the following two theorems.
 
  \begin{theorem}
 (Mass conservation)  	
  The solution of the projected AC equation \eqref{Proj_Liu-Wu} with projected AC type dynamic boundary conditions preserves  mass conservation in both the bulk and the surface as time evolves. Specifically, we have
  \begin{align*}
      \frac{\d}{{\d} t} \int_\Omega \phi \, {\d} \mbx = 0, \quad  \frac{\d}{{\d} t} \int_\Gamma \psi \, {\d} {S} = 0.
  \end{align*}
  
  \end{theorem}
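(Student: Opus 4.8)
The plan is to reduce the claim to the single structural fact that the outputs of the projection operators $\Pj_1$ and $\Pj_2$ have vanishing integral, which is exactly property (2) in the list above. Concretely, I would start by differentiating the two mass functionals in time and moving the time derivative inside the spatial integrals, which is legitimate under the smoothness assumed for the solution $(\phi,\psi)$ of \eqref{Proj_Liu-Wu}. This gives
\begin{align*}
\frac{\d}{\d t}\int_\Omega \phi \,\d\mbx = \int_\Omega \phi_t\,\d\mbx, \qquad
\frac{\d}{\d t}\int_\Gamma \psi \,\d S = \int_\Gamma \psi_t\,\d S .
\end{align*}

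Next I would substitute the bulk and surface evolution equations \eqref{equ:ProjAC_bulk} and \eqref{equ:ProjAC_surf}, namely $\phi_t = -\gamma_1 \Pj_1\mu$ and $\psi_t = -\gamma_2 \Pj_2\mu_\Gamma$, to obtain
\begin{align*}
\frac{\d}{\d t}\int_\Omega \phi \,\d\mbx = -\gamma_1\int_\Omega \Pj_1\mu \,\d\mbx, \qquad
\frac{\d}{\d t}\int_\Gamma \psi \,\d S = -\gamma_2\int_\Gamma \Pj_2\mu_\Gamma \,\d S .
\end{align*}
At this point the key step is to invoke property (2), $\Pj_i u \in H_i$ for any $L^2$ input; since $H_1$ and $H_2$ are precisely the zero-mean subspaces $\{u : \int_\Omega u\,\d\mbx = 0\}$ and $\{u : \int_\Gamma u\,\d S = 0\}$, we have $\int_\Omega \Pj_1\mu\,\d\mbx = 0$ and $\int_\Gamma \Pj_2\mu_\Gamma\,\d S = 0$. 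One may also see this directly from the definition \eqref{Pj}: integrating $\Pj_1\mu = \mu - \frac{1}{|\Omega|}\int_\Omega \mu\,\d\mbx$ over $\Omega$ cancels the two terms, and analogously on $\Gamma$. Both right-hand sides therefore vanish, establishing the two conservation identities independently of $\gamma_1,\gamma_2$.

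I do not expect any genuine obstacle in this argument, since the projection was designed to annihilate the mean. The only point requiring a word of justification is the interchange of differentiation and integration; this follows from the regularity of the solution (and, on $\Gamma$, from compactness of the boundary ensuring finite surface measure), so I would simply remark on the smoothness hypothesis rather than elaborate it. It is worth emphasizing that the bulk and surface conservation decouple completely here: each follows from its own projection and no coupling through the trace condition \eqref{phi_boundary} or the boundary chemical potential is needed.
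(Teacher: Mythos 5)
Your proposal is correct and follows essentially the same route as the paper: integrate the evolution equations \eqref{equ:ProjAC_bulk} and \eqref{equ:ProjAC_surf} over $\Omega$ and $\Gamma$ and invoke property (2), that $\Pj_i u \in H_i$ has zero mean, to conclude both integrals vanish. Your version is slightly more careful in spelling out the interchange of differentiation and integration, but the substance is identical.
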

  
  \begin{proof}
  	Integrating \eqref{equ:ProjAC_bulk} and \eqref{equ:ProjAC_surf} on both sides, we obtain
  	\begin{align*}
  	    \frac{\d}{{\d} t} \int_\Omega \phi \, {\d} \mbx = \gamma \int_\Omega  \Pj_1 (\Delta \phi - f(\phi)) \, \d \mbx = 0, 
  	\end{align*}
  	and 
  	\begin{align*}
  	    \frac{\d}{{\d} t} \int_\Gamma \psi \, {\d} S = M \int_\Gamma  \Pj_2 (\Delta_\Gamma \psi - g(\phi) - \partial_\mathbf{n} \phi ) \, {\d} S = 0, 
  	\end{align*}
	utilizing the second property stated above.
  	
  \end{proof}

  \begin{theorem}
  	(Energy decay) 
   If $ \phi$ is the solution of the projected Allen-Cahn equation \eqref{Proj_Liu-Wu}, then the following inequality holds for the total energy functional $E^{\text{total}}$:
   \begin{equation}
       \frac{ {\d} E^{\text{total}}}{{\d} t} \leq 0.
   \end{equation}
  \end{theorem}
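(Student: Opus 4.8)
The plan is to differentiate $E^{\text{total}}$ along the projected flow and show that the result collapses into a sum of two terms of the form $-\gamma_i\langle \Pj_i(\cdot),(\cdot)\rangle$, each nonpositive by property (4). First I would split
\[
\frac{\d}{\d t}E^{\text{total}} = \frac{\d}{\d t}E^{\text{bulk}} + \frac{\d}{\d t}E^{\text{surf}},
\]
and differentiate the bulk part under the integral sign to obtain $\int_\Omega\big(\kappa^2\nabla\phi\cdot\nabla\phi_t + f(\phi)\phi_t\big)\,\d\mbx$. Applying Green's first identity to transfer the gradient off $\phi_t$ produces an interior term together with a boundary flux, and using the definition $\mu=-\kappa^2\Delta\phi+f(\phi)$ gives
\[
\frac{\d}{\d t}E^{\text{bulk}} = \int_\Omega \big(-\kappa^2\Delta\phi + f(\phi)\big)\phi_t\,\d\mbx + \int_\Gamma \kappa^2(\partial_\mathbf{n}\phi)\,\phi_t\,\d S = \int_\Omega \mu\,\phi_t\,\d\mbx + \int_\Gamma \kappa^2(\partial_\mathbf{n}\phi)\,\phi_t\,\d S.
\]
Note that the no-flux condition $\partial_\mathbf{n}\mu=0$ plays no role here, since the integration by parts is performed on $\nabla\phi$ rather than on $\mu$.

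Next I would treat the surface energy. Because $\Gamma=\partial\Omega$ is a closed manifold, integrating by parts with the Laplace--Beltrami operator produces no boundary contribution, so
\[
\frac{\d}{\d t}E^{\text{surf}} = \int_\Gamma \big(-\kappa^2\Delta_\Gamma\psi + g(\psi)\big)\psi_t\,\d S.
\]
The crucial step is then to combine the two boundary contributions. Differentiating the trace identity $\phi|_\Gamma=\psi$ yields $\phi_t|_\Gamma=\psi_t$ on $\Gamma$, so the flux term from the bulk becomes $\int_\Gamma \kappa^2(\partial_\mathbf{n}\phi)\,\psi_t\,\d S$ and merges with the surface integral to reproduce exactly the boundary chemical potential:
\[
\int_\Gamma \big(-\kappa^2\Delta_\Gamma\psi + g(\psi) + \kappa^2\partial_\mathbf{n}\phi\big)\psi_t\,\d S = \int_\Gamma \mu_\Gamma\,\psi_t\,\d S.
\]
Collecting everything leaves the clean expression $\frac{\d}{\d t}E^{\text{total}} = \int_\Omega \mu\,\phi_t\,\d\mbx + \int_\Gamma \mu_\Gamma\,\psi_t\,\d S$.

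Finally I would substitute the dynamics $\phi_t=-\gamma_1\Pj_1\mu$ and $\psi_t=-\gamma_2\Pj_2\mu_\Gamma$ to get
\[
\frac{\d}{\d t}E^{\text{total}} = -\gamma_1\langle \Pj_1\mu,\mu\rangle - \gamma_2\langle \Pj_2\mu_\Gamma,\mu_\Gamma\rangle \leq 0,
\]
where the inequality follows from the symmetry of the $L^2$ inner product, property (4) (namely $\langle \Pj_i u,u\rangle\geq 0$), and the positivity $\gamma_1,\gamma_2>0$. I expect the main obstacle to be purely the boundary bookkeeping in the second paragraph: one must verify that the normal flux $\kappa^2\partial_\mathbf{n}\phi$ generated by the bulk integration by parts is precisely the extra term appearing in $\mu_\Gamma$, a cancellation that hinges both on the compatibility $\phi_t=\psi_t$ on $\Gamma$ and on the specific structure of the dynamic boundary condition. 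The remainder is routine, and the argument is independent of any convexity of $F$ or $G$, so it applies equally to the double-well and moving-contact-line surface potentials.
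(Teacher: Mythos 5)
Your proposal is correct and follows essentially the same route as the paper's proof: differentiate the energy, integrate by parts on the bulk gradient term, use the trace condition $\phi|_\Gamma=\psi$ to merge the normal-flux term into $\mu_\Gamma$, and then invoke the dynamics together with property (4) of the projections. The only cosmetic differences are that you make explicit two points the paper leaves implicit (that $\partial_{\mathbf n}\mu=0$ is not used, and that $\Gamma$ being closed kills the surface boundary term), neither of which changes the argument.
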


  \begin{proof}
      On one hand, taking the time derivative for the total energy $E^{\text{total}}$ \eqref{E_total}, we have
      \begin{align}
          \frac{{\d} E^{\text{total}} }{{\d}t} = & \int_\Omega \kappa^2 \nabla \phi  \cdot \nabla \frac{\partial \phi}{\partial t}  + f(\phi) \frac{\partial \phi}{\partial t}  \, {\d} {\bf{x}} +  \int_{\Gamma}  \kappa^2 \nabla_\Gamma \psi \cdot \nabla_\Gamma (\frac{\partial \psi}{\partial t}) + g(\psi) \frac{\partial \psi}{\partial t}  \, {\d} S \nonumber\\
          = &  \int_\Omega - \kappa^2 \Delta \phi  \cdot \frac{\partial \phi}{\partial t} \, {\d} {\bf{x}} +  \int_\Gamma \kappa^2 \partial_{\mathbf{n}} \phi \cdot \frac{\partial \phi}{\partial t} \, {\d} S + \int_\Omega  f(\phi) \frac{\partial \phi}{\partial t}  \, {\d} {\bf{x}} \nonumber\\
          & + \int_{\Gamma}  -\kappa^2 \Delta_\Gamma \psi \cdot \frac{\partial \psi}{\partial t} + g(\psi) \frac{\partial \psi}{\partial t}  \, {\d} S \nonumber\\
          = &  \int_\Omega \Big( - \kappa^2 \Delta \phi 
          +  f(\phi) \Big) \cdot \frac{\partial \phi}{\partial t} \, {\d} \bf{x} \nonumber\\
         &  + \int_\Gamma  \Big( -\kappa^2 \Delta_\Gamma \psi + g(\psi) + \kappa^2 \partial_{\mathbf{n}} \phi \Big) \cdot \frac{\partial \psi}{\partial t} \, {\d} S, \label{Egy_decay}
      \end{align}
      where, the second equality employs the Gauss-Green formula, and the last equality is derived using \eqref{phi_boundary}.

      On the other hand, taking the $L^2$ inner product of \eqref{equ:ProjAC_bulk} and \eqref{equ:ProjAC_surf} with $\mu$ and $\mu_\Gamma$ in the bulk and the surface, respectively, yields
      \begin{align}\label{Egy_decay1}
        \int_\Omega \frac{\partial \phi}{\partial t} \cdot \Big( - \kappa^2 \Delta \phi 
          +  f(\phi) \Big)  \, {\d} {\bf{x}} = \langle -\gamma_1 \Pj_1 \mu, \mu \rangle_\Omega \leq 0,
      \end{align}
      and
      \begin{align}\label{Egy_decay2}
         \int_\Gamma \frac{\partial \psi}{\partial t} \cdot \Big( - \kappa^2 \Delta_\Gamma \psi 
          +  g(\psi) + \kappa^2 \partial_{\bf{n}} \phi \Big)  \, {\d} S = \langle -\gamma_2 \Pj_2 \mu_\Gamma, \mu_\Gamma \rangle_\Gamma  \leq 0,
      \end{align}
      where $\langle \cdot, \cdot \rangle_\Omega$ and $\langle \cdot, \cdot \rangle_\Gamma$ denote the $L^2$ inner product in the bulk and the surface, respectively.
      By combing \eqref{Egy_decay}, \eqref{Egy_decay1} and \eqref{Egy_decay2}, we obtain the energy dissipation law:
      \[ \frac{{\d} E^{\text{total}}}{ {\d} t} \leq 0 .\]
      
  \end{proof}

  \section{Time-Discrete scheme using the convex splitting method}\label{Scheme}
  
  
  In this part, we propose an unconditional energy stable finite difference scheme for the projected AC type Liu-Wu model  \eqref{Proj_Liu-Wu} using the convex splitting method. We will also discuss the mass conservation property and the energy dissipation law for the time-discrete scheme. For the sake of convenience in narration, we consider the double-well potential $G(\psi) = \frac{1}{4}(\psi^2-1)^2$ as the surface potential in this section.

    \begin{lemma}\label{convex_forms}
        Suppose $S_1$ and $S_2$ are two positive parameters, satisfying $S_1 \geq \frac{1}{2}\max\limits_{\phi\in\mathbb{R}} F^{\prime\prime}(\phi),$ $ S_2 \geq \frac{1}{2}\max\limits_{\psi\in\mathbb{R}} G^{\prime\prime}(\psi)$, where $F(\phi) $ and $G(\psi)$ are the double well potentials defined above. We define the following energies:
        \begin{align}
            E_c^{\text{bulk}} & = \int_\Omega \frac{\kappa^2}{2}|\nabla \phi|^2 + \frac{1}{2} S_1 \phi^2 + \frac{1}{4} \, \d \bf{x},\\
            E_e^{\text{bulk}} & = \int_\Omega \frac{1}{2} (S_1+1) \phi^2 - \frac{1}{4}\phi^4 \, \d \bf{x},\\
           E_c^{\text{surf}} & = \int_\Gamma \frac{\kappa^2}{2}|\nabla_\Gamma \psi|^2 + \frac{1}{2} S_2  \psi^2 + \frac{1}{4} \, {\d} S,\\
            E_e^{\text{surf}} & = \int_\Gamma \frac{1}{2} (S_2+1) \psi^2 - \frac{1}{4}\psi^4 \, {\d} S.
        \end{align}
        Then $ E_c^{\text{bulk}} $ and $ E_e^{\text{bulk}}$ are both convex w.r.t. $\phi$, and $ E_c^{\text{surf}}$ and $E_e^{\text{surf}} $ are both convex w.r.t. $\psi$. 
    \end{lemma}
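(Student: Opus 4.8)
The plan is to reduce convexity of each of the four functionals to a pointwise sign condition on the purely algebraic part of its integrand, and then to carry out four elementary one-variable checks. Recall that a twice-differentiable functional is convex exactly when its second variation is nonnegative in every direction. For a bulk functional $J(\phi)=\int_\Omega \frac{\kappa^2}{2}|\nabla\phi|^2+h(\phi)\,\d\mbx$ the second variation in a direction $v$ is $\int_\Omega \kappa^2|\nabla v|^2+h''(\phi)\,v^2\,\d\mbx$, and for a surface functional $\int_\Gamma \frac{\kappa^2}{2}|\nabla_\Gamma\psi|^2+h(\psi)\,\d S$ it is $\int_\Gamma \kappa^2|\nabla_\Gamma v|^2+h''(\psi)\,v^2\,\d S$. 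Because the Dirichlet-type terms are manifestly nonnegative, each functional is convex once $h''\ge 0$ holds for its algebraic integrand $h$; this is the only thing I need to verify in each of the four cases.

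For the contractive energies the check is immediate. In $E_c^{\text{bulk}}$ the algebraic integrand is $h(\phi)=\frac12 S_1\phi^2+\frac14$, with $h''(\phi)=S_1\ge 0$; the additive constant and the gradient term are irrelevant to convexity. The same computation with $S_2$ in place of $S_1$ gives convexity of $E_c^{\text{surf}}$ in $\psi$. These two cases use nothing beyond positivity of the stabilization parameters.

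The expansive energies are where the real work lies. For $E_e^{\text{bulk}}$ the algebraic integrand is $h(\phi)=\frac12(S_1+1)\phi^2-\frac14\phi^4$, so $h''(\phi)=(S_1+1)-3\phi^2$, and for $E_e^{\text{surf}}$ one gets $h''(\psi)=(S_2+1)-3\psi^2$. This is precisely where the hypotheses $S_1\ge\frac12\max F''$ and $S_2\ge\frac12\max G''$ enter: with $F''(\phi)=3\phi^2-1$ and $G''(\psi)=3\psi^2-1$, the stabilization bound is meant to dominate the quartic curvature and force $h''\ge 0$. I expect this to be the main obstacle, since $h''$ is not nonnegative on all of $\mathbb{R}$---the quartic makes $h$ concave for large $|\phi|$---so the argument must control the pointwise values of the order parameter through the bounded, invariant range on which it lives, and then match the constant in the stabilization condition against $3\phi^2$ on that range. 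The step to watch is therefore securing the inequality $(S_i+1)\ge 3\phi^2$ there; once that sign is in hand, the second-variation formula of the first paragraph closes all four cases uniformly. (If instead the convex quartic $\frac14\phi^4$ is grouped with the contractive energy, both parts become convex with no range restriction, which is the cleaner version I would ultimately aim for.)
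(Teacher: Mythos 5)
Your reduction to a pointwise sign condition on the algebraic part of each integrand is exactly the paper's route: the paper computes $\delta^2_\phi E_c^{\text{bulk}} = -\kappa^2\Delta + S_1$ and $\delta^2_\phi E_e^{\text{bulk}} = (S_1+1)-3\phi^2$ (and the analogous surface expressions) and declares all of them positive definite ``under the given conditions.'' Your handling of $E_c^{\text{bulk}}$ and $E_c^{\text{surf}}$ is complete and coincides with the paper's.

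On the expansive parts you have put your finger on a genuine defect that the paper's proof silently skips over, but you also do not close it. The integrand of $E_e^{\text{bulk}}$ has second derivative $(S_1+1)-3\phi^2$, which is negative for $|\phi|>\sqrt{(S_1+1)/3}$, so $E_e^{\text{bulk}}$ is \emph{not} convex over all admissible $\phi$ for any finite $S_1$; worse, the hypothesis $S_1\ge \tfrac12\max_{\phi\in\mathbb{R}}F''(\phi)$ is vacuous as literally written, since $F''(\phi)=3\phi^2-1$ is unbounded above on $\mathbb{R}$. The standard repairs are the ones you name: either restrict the maximum to a bounded invariant range of $\phi$ (an $L^\infty$ bound such as $|\phi|\le M$, which must then be assumed or proved for the scheme) and verify $S_1+1\ge 3\phi^2$ there, or regroup the convex quartic $\tfrac14\phi^4$ into $E_c$, which yields unconditional convexity at the price of an implicit nonlinear term. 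One further point you should make explicit: even on a bounded range, the stated condition $S_1\ge\tfrac12\max F''$ only gives $S_1\ge\tfrac12(3\phi^2-1)$, whereas convexity of $E_e$ needs $S_1\ge 3\phi^2-1$; the factor $\tfrac12$ is what the subsequent energy-stability theorem extracts from its Taylor remainder, not what this lemma requires. So your proposal, like the paper's own proof, leaves the $E_e$ cases unproved as stated --- but unlike the paper it correctly identifies where and why the argument is incomplete, and what additional hypothesis would fix it.
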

    \begin{proof}
        It is straightforward to calculate the first order variational derivatives of $E_c^{bulk}$ and $E_e^{bulk}$ w.r.t. $\phi$: 
        \begin{align*}
            \delta_\phi E_c^\text{bulk} = -\kappa^2 \Delta \phi + S_1 \phi, \quad \quad
            \delta_\phi E_e^\text{bulk} = (S_1+1)\phi - \phi^3.
        \end{align*}
    The second order variational derivatives are given by:
        \begin{align*}
            \delta^2_\phi E_c^\text{bulk} = -\kappa^2 \Delta + S_1, \quad \quad
            \delta^2_\phi E_e^\text{bulk} = S_1 - (3\phi^2-1).
        \end{align*}
        Both $\delta^2_\phi E_c^\text{bulk}$  and $\delta^2_\phi E_e^\text{bulk}$ are positive definite under the given conditions, confirming  the convexity of $E_c^\text{bulk}$ and $E_e^\text{bulk}$.
        
        Similarly, we can verify the positive definite nature of the second order variational derivatives of $E_c^\text{surf}$ and $E_e^\text{surf}$ as
        \begin{align*}
            \delta^2_\psi E_c^\text{surf} = -\kappa^2 \Delta + S_2, \quad \text{and} \quad
            \delta^2_\psi E_e^\text{surf} = S_2 - (3\psi^2-1),
        \end{align*}
        which also indicates the convexity of $E_c^\text{surf}$ and $E_e^\text{surf}$.
        
    \end{proof}

Based on the convex forms in Lemma \ref{convex_forms}, the total free energy in the system can be split into two convex parts as follows:
\begin{equation}\label{E_split}
    E^\text{total} = E_c - E_e,
\end{equation}
where
\begin{align*}
    E_c = E_c^\text{bulk} + E_c^\text{surf}, \quad \text{and} \quad 
    E_e = E_e^\text{bulk} + E_e^\text{surf}.
\end{align*}
By treating  the $E_c$ part implicitly and the $E_e$ part explicitly,  we can derive the time-discrete numerical scheme for the system \eqref{Proj_Liu-Wu}:
    \begin{subnumcases}{\label{convex_scheme}}
        \frac{\phi^{n+1}-\phi^n}{\Delta t}  =  \gamma_1 \Pj_1 \left( \kappa^2 \Delta\phi - S_1 \phi \right)^{n+1} \!+\! \gamma_1 \Pj_1 \left( -\phi^3 + (S_1+1) \phi \right)^n, \!\! & \text { in }  $\Omega$, \\
        \frac{\psi^{n+1}-\psi^n}{\Delta t}  =  \gamma_2 \Pj_2 \left( \kappa^2 \Delta_\Gamma\psi - S_2 \psi + \kappa^2 \partial_{\mathbf{n}} \phi \right)^{n+1} \!+\! \gamma_2 \Pj_2 \left( -\psi^3 + (S_2+1) \psi \right)^n,  \!\! & \text { on }  $\Gamma$ .    
    \end{subnumcases}
 where the parameters $S_1$ and $S_2$ satisfy the conditions outlined in Lemma \ref{convex_forms}.

 \begin{theorem}
      Suppose the total free energy $E^{\text{total}}$ is split into two parts, $E^{\text{total}} = E_c - E_e$, as in \eqref{E_split}. Then the time-discrete scheme \eqref{convex_scheme} is unconditionally energy stable. This means that for any time step size $\Delta t >0$, the following inequality holds:
      \begin{equation}
          E^{\text{total}}(\phi^{k+1},\psi^{k+1}) \leq           E^{\text{total}}(\phi^{k},\psi^k).
      \end{equation}
  \end{theorem}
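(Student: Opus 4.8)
The plan is to run the standard convex-splitting energy-stability argument, but with the projection operators $\Pj_1,\Pj_2$ inserted at the very end to annihilate the dissipation terms via their non-negativity property. First I would invoke Lemma \ref{convex_forms}: since $E_c$ and $E_e$ are each convex, the two elementary convexity inequalities
\[
E_c(U^{k+1})-E_c(U^{k})\le \langle \delta E_c(U^{k+1}),\,U^{k+1}-U^{k}\rangle,\qquad
-E_e(U^{k+1})+E_e(U^{k})\le -\langle \delta E_e(U^{k}),\,U^{k+1}-U^{k}\rangle
\]
hold, where $U=(\phi,\psi)$. Adding them and using the splitting \eqref{E_split} gives $E^{\text{total}}(U^{k+1})-E^{\text{total}}(U^{k})\le \langle \delta E_c(U^{k+1})-\delta E_e(U^{k}),\,U^{k+1}-U^{k}\rangle$. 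The implicit-explicit construction of \eqref{convex_scheme} is designed precisely so that the right-hand side reproduces the discrete driving terms.

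Next I would treat the bulk and surface contributions separately. For the bulk I apply the convexity inequalities to $E_c^{\text{bulk}},E_e^{\text{bulk}}$ while keeping the Dirichlet energy in its undifferentiated form, and only afterwards use the Gauss-Green formula. This yields an interior term $\int_\Omega \tilde\mu\,(\phi^{k+1}-\phi^{k})\,\d\mbx$ with $\tilde\mu=(-\kappa^2\Delta\phi^{k+1}+S_1\phi^{k+1})-((S_1+1)\phi^{k}-(\phi^{k})^3)$, plus a boundary remainder $\int_\Gamma \kappa^2\partial_{\mathbf n}\phi^{k+1}\,(\psi^{k+1}-\psi^{k})\,\d S$, where the trace identity \eqref{phi_boundary} lets me replace $(\phi^{k+1}-\phi^{k})|_\Gamma$ by $\psi^{k+1}-\psi^{k}$. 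For the surface, the same argument (now integrating by parts on the closed manifold $\Gamma$, so no further boundary contributions appear) produces $\int_\Gamma \nu\,(\psi^{k+1}-\psi^{k})\,\d S$, with $\nu$ the convex-split surrogate of $-\kappa^2\Delta_\Gamma\psi+g(\psi)$.

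The crucial bookkeeping step is to recognize the bulk boundary remainder $\kappa^2\partial_{\mathbf n}\phi^{k+1}$ as exactly the coupling term of the discrete surface chemical potential, so that $\nu$ together with $\kappa^2\partial_{\mathbf n}\phi^{k+1}$ reconstitutes the full $\mu_\Gamma^{k+1}$ of \eqref{equ:ProjAC_surf}; this is the discrete mirror of the continuous computation \eqref{Egy_decay}. After this identification I rewrite the scheme as $\phi^{k+1}-\phi^{k}=-\gamma_1\Delta t\,\Pj_1\tilde\mu$ and $\psi^{k+1}-\psi^{k}=-\gamma_2\Delta t\,\Pj_2\mu_\Gamma^{k+1}$, so that the two contributions collapse to $-\gamma_1\Delta t\,\langle \tilde\mu,\Pj_1\tilde\mu\rangle_\Omega$ and $-\gamma_2\Delta t\,\langle \mu_\Gamma^{k+1},\Pj_2\mu_\Gamma^{k+1}\rangle_\Gamma$. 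Using that each $\Pj_i$ is self-adjoint together with property~4 ($\langle\Pj_i u,u\rangle\ge0$), both inner products are non-negative, hence both contributions are $\le 0$ for every $\Delta t>0$, which gives $E^{\text{total}}(\phi^{k+1},\psi^{k+1})\le E^{\text{total}}(\phi^{k},\psi^{k})$.

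I expect the main obstacle to be precisely this boundary-coupling bookkeeping: the bulk Gauss-Green remainder must recombine with the surface variational derivative to form $\mu_\Gamma^{k+1}$ \emph{with the correct sign}, so that no uncanceled boundary integral survives and the whole surface contribution is absorbed into the single quadratic form $\langle \mu_\Gamma^{k+1},\Pj_2\mu_\Gamma^{k+1}\rangle_\Gamma$. Tracking the sign of the $\kappa^2\partial_{\mathbf n}\phi$ term through the convex splitting, and ensuring it is carried implicitly in a manner consistent with the continuous projected model \eqref{equ:ProjAC_surf}, is the delicate part; once the coupling term is correctly accounted for, the remainder is the routine convex-splitting estimate combined with the positivity of the projections.
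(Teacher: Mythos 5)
Your proposal is correct and follows essentially the same route as the paper's proof: convex splitting via Lemma \ref{convex_forms}, pairing the scheme with the discrete chemical potentials, Gauss--Green bookkeeping of the $\kappa^2\partial_{\mathbf n}\phi^{n+1}$ term against the trace condition \eqref{phi_boundary}, and the non-negativity of $\langle \Pj_i u, u\rangle$ to kill the dissipation terms. The only difference is presentational: you invoke the abstract convexity (gradient) inequalities for $E_c$ and $E_e$, whereas the paper reaches the same estimate by an explicit second-order Taylor expansion with Lagrange remainder plus the identity $a(a-b)=\tfrac12(a^2-b^2+(a-b)^2)$, which additionally exhibits the extra non-negative terms $\tfrac{\kappa^2}{2}\|\nabla(\phi^{n+1}-\phi^n)\|^2$ and $(S_i-\tfrac12 F''(\xi))\|\cdot\|^2$ that your version absorbs into the inequality.
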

  
  \begin{proof}
      Multiplying both sides of \eqref{convex_scheme} by  $\Delta t$ and introducing the notation $\nu_1$ and $\nu_2$ as follows:
      \begin{align*}
          \nu_1 &\triangleq \left[ \kappa^2 \Delta\phi - S_1 \phi \right]^{n+1} \!+\! \left[ -\phi^3 + (S_1+1) \phi \right]^n,\\
          \nu_2  &\triangleq \left[ \kappa^2 \Delta_\Gamma\psi - S_2 \psi + \kappa^2 \partial_{\mathbf{n}} \phi \right]^{n+1} \!+\! \left[ -\psi^3 + (S_2+1) \psi \right]^n.
      \end{align*}
    We can rearrange  \eqref{convex_scheme} as:
       \begin{subnumcases}{\label{convex_scheme_1}}
        \phi^{n+1}-\phi^n  =  \Delta t \gamma_1 \Pj_1 \nu_1,  & \text { in }  $\Omega$, \label{phi_scheme}\\
        \psi^{n+1}-\psi^n  =  \Delta t\gamma_2 \Pj_2 \nu_2,  & \text { on }  $\Gamma$. \label{psi_scheme}   
    \end{subnumcases}
    This formulation introduces the updated time-discrete system in terms of the variables $\nu_1$ and $\nu_2$, simplifying the numerical scheme.

    On the one hand, taking the $L^2$ inner product of \eqref{phi_scheme} and \eqref{psi_scheme} with $-\nu_1$ and $-\nu_2$, respectively, we have
    
    For the bulk term: 
    \begin{align*}
       0 \geq &~  \Delta t \gamma_1 \langle \Pj_1 \nu_1, 
        -\nu_1 \rangle_\Omega  \\
        = & ~ \langle \phi^{n+1} - \phi^n, -\nu_1 \rangle_\Omega \\
         = &~ \langle \phi^{n+1} - \phi^n,  -\kappa^2 \Delta\phi^{n+1} + S_1 (\phi^{n+1}- \phi^n) + (\phi^n)^3 - \phi^n \rangle_\Omega \\
         = & ~ \int_\Omega \kappa^2 \nabla \phi^{n+1} \cdot (\nabla\phi^{n+1} - \nabla\phi^n) \, {\d} {\bf{x}} + \int_\Gamma \kappa^2 \partial_{\mathbf{n}} \phi^{n+1} (\phi^{n+1} - \phi^n) \, {\d} S\\
         & + \int_\Omega f(\phi^n)(\phi^{n+1} - \phi^n) \, {\d} {\bf{x}} + S_1 \|\phi^{n+1} - \phi^n\|_\Omega^2,
    \end{align*}
    where $f(\phi^n) = (\phi^n)^3 - \phi^n. $ 
   The first term can be rewritten as
    \begin{align*}
         \frac{\kappa^2}{2} \int_\Omega |\nabla\phi^{n+1}|^2 - |\nabla\phi^n|^2 + |\nabla(\phi^{n+1} - \phi^n)|^2 \, \d {\bf{x}}.
    \end{align*}
    Thus we obtain
    \begin{align}\label{phi_ineq}
        &  \Delta t \gamma_1 \langle \Pj_1 \nu_1, 
        -\nu_1 \rangle_\Omega = \int_\Omega \frac{\kappa^2}{2} |\nabla\phi^{n+1}|^2 - \frac{\kappa^2}{2} |\nabla\phi^n|^2  + f(\phi^n)(\phi^{n+1} - \phi^n) \, \d {\bf{x}} \nonumber\\
        & +S_1 \|\phi^{n+1} - \phi^n\|_\Omega^2 + \frac{\kappa^2}{2} \|\nabla(\phi^{n+1} - \phi^n) \|^2_\Omega + \int_\Gamma \kappa^2 \partial_{\mathbf{n}} \phi^{n+1} (\phi^{n+1} - \phi^n) \, {\d} S \leq 0.
    \end{align}
    Similarly,  for the surface term:
     \begin{align}\label{psi_ineq}
        0 \geq &~  \Delta t \gamma_2 \langle \Pj_2 \nu_2, 
        -\nu_2 \rangle_\Gamma  \nonumber\\
         = &~ \langle \psi^{n+1} - \psi^n,  -\kappa^2 \Delta_\Gamma \psi^{n+1} + S_2 (\psi^{n+1}- \psi^n) + (\psi^n)^3 - \psi^n - \kappa^2 \partial_{\mathbf{n}}\phi^{n+1} \rangle_\Gamma \nonumber\\
         = & ~ \int_\Gamma \kappa^2 \nabla_\Gamma \psi^{n+1} \cdot (\nabla_\Gamma \psi^{n+1} - \nabla_\Gamma \psi^n) \, {\d} S + \int_\Gamma g(\psi^n)(\psi^{n+1} - \psi^n) \, {\d} S \nonumber\\
         & - \int_\Gamma \kappa^2 \partial_{\mathbf{n}} \phi^{n+1} (\psi^{n+1} - \psi^n) \, {\d} S
          + S_2 \|\psi^{n+1} - \psi^n\|_\Gamma^2 \nonumber\\
        = & \int_\Gamma \frac{\kappa^2}{2} |\nabla_\Gamma\psi^{n+1}|^2 - \frac{\kappa^2}{2} |\nabla_\Gamma\psi^n|^2 + g(\psi^n)(\psi^{n+1} - \psi^n) \, {\d} S + S_2 \|\psi^{n+1} - \psi^n\|_\Gamma^2 \nonumber\\
        &  + \frac{\kappa^2}{2} \|\nabla_\Gamma(\phi^{n+1} - \phi^n)\|^2_\Gamma - \int_\Gamma \kappa^2 \partial_{\mathbf{n}} \phi^{n+1} (\psi^{n+1} - \psi^n) \, {\d} S \leq 0.
    \end{align}
    On the other hand, from \eqref{E_bulk} and \eqref{E_surf}, we can express the changes in energy as follows:

    For the bulk energy:
    \begin{align}\label{Ebulk}
        & E^\text{bulk}(\phi^{n+1}) - E^\text{bulk}(\phi^{n}) \nonumber\\
         = & \int_\Omega \frac{\kappa^2}{2} |\nabla\phi^{n+1}|^2 - \frac{\kappa^2}{2} |\nabla\phi^n|^2  + F(\phi^{n+1}) - F(\phi^n) \, {\d} {\bf{x}} \nonumber\\
         = & \int_\Omega \frac{\kappa^2}{2} |\nabla\phi^{n+1}|^2 - \frac{\kappa^2}{2} |\nabla\phi^n|^2  + f(\phi^n)(\phi^{n+1} - \phi^n) + \frac{F^{\prime\prime}(\xi)}{2}(\phi^{n+1} - \phi^n)^2 \, {\d} {\bf{x}} \nonumber\\
          = & \int_\Omega \frac{\kappa^2}{2} |\nabla\phi^{n+1}|^2 - \frac{\kappa^2}{2} |\nabla\phi^n|^2  + f(\phi^n)(\phi^{n+1} - \phi^n) \, {\d} {\bf{x}} + \frac{F^{\prime\prime}(\xi)}{2}\|\phi^{n+1} - \phi^n\|^2_\Omega,
    \end{align}
    where $\xi$ is between $\phi^n$ and $\phi^{n+1}$.

    For the surface energy:
    \begin{align}\label{Esurf}
        & E^\text{surf}(\psi^{n+1}) - E^\text{surf}(\psi^{n}) \nonumber\\
         = & \int_\Gamma \frac{\kappa^2}{2} |\nabla_\Gamma\psi^{n+1}|^2 - \frac{\kappa^2}{2} |\nabla_\Gamma\psi^n|^2  + G(\psi^{n+1}) - G(\psi^n) \, {\d} {\bf{x}} \nonumber\\
         = & \int_\Gamma \frac{\kappa^2}{2} |\nabla_\Gamma\psi^{n+1}|^2 - \frac{\kappa^2}{2} |\nabla_\Gamma\psi^n|^2  + g(\psi^n)(\psi^{n+1} - \psi^n) + \frac{G^{\prime\prime}(\eta)}{2}(\psi^{n+1} - \psi^n)^2 \, {\d} S \nonumber \\
          = & \int_\Gamma \frac{\kappa^2}{2} |\nabla_\Gamma\psi^{n+1}|^2 - \frac{\kappa^2}{2} |\nabla_\Gamma\psi^n|^2  + g(\psi^n)(\psi^{n+1} - \psi^n)  \, {\d} S + \frac{G^{\prime\prime}(\eta)}{2} \|\psi^{n+1} - \psi^n\|^2_\Gamma,
    \end{align}
    where  $\eta$ is between $\psi^n$ and $\psi^{n+1}$. The second order Tylor expansion of $F(\phi^{n+1})$ and $G(\psi^{n+1})$ at $\phi^n$ and $\psi^n$ are applied for
    the last equalities in \eqref{Ebulk} and \eqref{Esurf}. 
    
    Adding \eqref{phi_ineq} and \eqref{psi_ineq} along with \eqref{Ebulk} and \eqref{Esurf}, and 
    considering $\phi = \psi, \text{on} ~ \Gamma$, we obtain:
    \begin{align*}
    & E^{\text{total}}(\phi^{n+1},\psi^{n+1}) -         E^{\text{total}}(\phi^{n},\psi^n) \\
    = ~ & E^\text{bulk}(\phi^{n+1}) - E^\text{bulk}(\phi^n) + E^\text{surf}(\psi^{n+1}) - E^\text{surf}(\psi^n) \\
    = ~ &  \Delta t \gamma_1 \langle \Pj_1 \nu_1, 
        -\nu_1 \rangle_\Omega +  \Delta t \gamma_2 \langle \Pj_2 \nu_2, 
        -\nu_2 \rangle_\Gamma - \frac{\kappa^2}{2} \|\nabla(\phi^{n+1} - \phi^n) \|^2_\Omega -  \frac{\kappa^2}{2} \|\nabla_\Gamma(\phi^{n+1} - \phi^n)\|^2_\Gamma \\
    & - (S_1 - \frac{F^{\prime\prime}(\xi)}{2}) \|\phi^{n+1} - \phi^n \|^2_\Omega - (S_2 - \frac{G^{\prime\prime}(\eta)}{2}) \|\psi^{n+1} - \psi^n\|^2_\Gamma),
    \end{align*}
    
    Considering the conditions 
    $ S_1 \geq \frac{1}{2}\max\limits_{\phi\in\mathbb{R}} F^{\prime\prime}(\phi), S_2 \geq \frac{1}{2}\max\limits_{\psi\in\mathbb{R}} G^{\prime\prime}(\psi)$ from Lemma \ref{convex_forms}, we conclude:
    $$ E^{\text{total}}(\phi^{k+1},\psi^{k+1}) -         E^{\text{total}}(\phi^{k},\psi^k) \leq 0, $$
    which completes the proof.
      
  \end{proof}

  \section{Numerical experiments}\label{Num_ex}

In this section, we demonstrate the projection method by locating the steady state of the original Liu-Wu model \eqref{Liu-Wu_model} through the projected Liu-Wu model \eqref{Proj_Liu-Wu}. We test three different initial states for the double-well  surface potential $G(\phi)$, and two initial cases for the moving contact line surface potential. 
The results are presented from two perspectives: energy decay and mass conservation, both in the bulk and the surface.


In the numerical implementation of the scheme in \eqref{convex_scheme}, we use the following parameters: $\gamma_1=100, \gamma_2 = 100, S_1 =100, $ and $ S_2 = 100.$ The time step size is $\Delta t=0.001$, with mesh points $N_x=N_y=200$, grid spacing $h=1/N_x$, and $\kappa = 2h$.

\subsection{Example 5.1}

In the first example, the initial state is defined as:
\begin{equation}\label{ex1_phi0}
    \phi_0({\bf{x}}) = \begin{cases}
        0,  & \text{in $\Omega=[0,1]\times [0,1]$,}\\
        1, & \text{on $\Gamma$.}
    \end{cases}
\end{equation}

Figure \ref{ex1_phi_t} shows the contour plots of $\phi$ at various times $T=0.1, 0.5, 2 $ and $7$, respectively, under the initial condition \eqref{ex1_phi0}. As time progresses, driven by the projected force, the 0-phase region in the domain gradually shrinks, while the 1-phase region on the boundary initially expands, in accordance with mass conservation. Ultimately, the $\phi$ field evolves into a circular -1-phase region surrounded by a 1-phase region, as seen in Figure \ref{ex1_phi_t} (D)). It is important to note that the blue region in the initial state represents the 0-phase, while in the steady state, it represents the -1-phase.
In Figure \ref{num_ex1}, we validate the mass conservation and energy decay properties by plotting the time evolution of mass and energy. In Figure \ref{num_ex1}(A), the red and blue lines  represent the mass in the bulk and on the boundary, respectively. Figure \ref{num_ex1}(B) illustrates the decay of the total energy over time.

 \begin{figure}[H]
    \centering
    \includegraphics[width=0.23\linewidth]{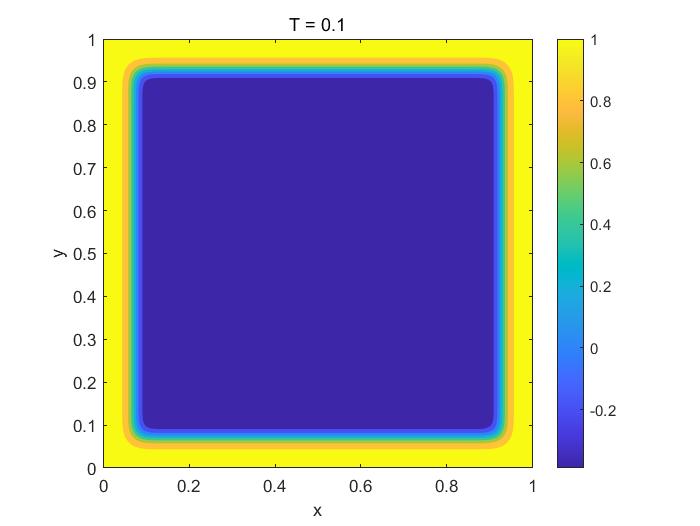}
    \includegraphics[width=0.23\linewidth]{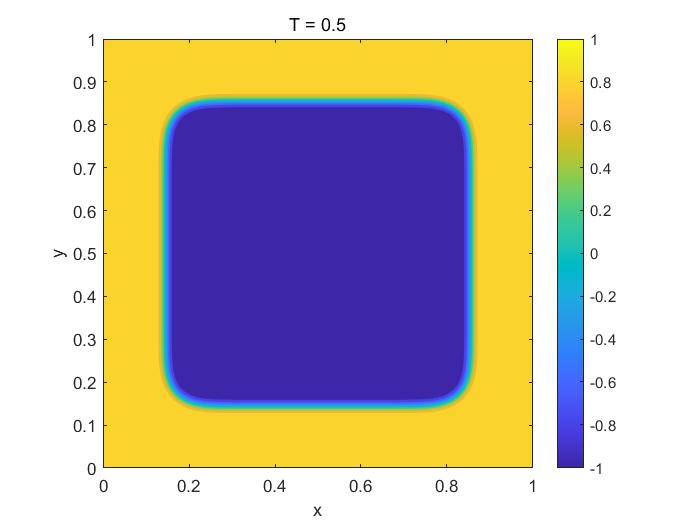}
    \includegraphics[width=0.23\linewidth]{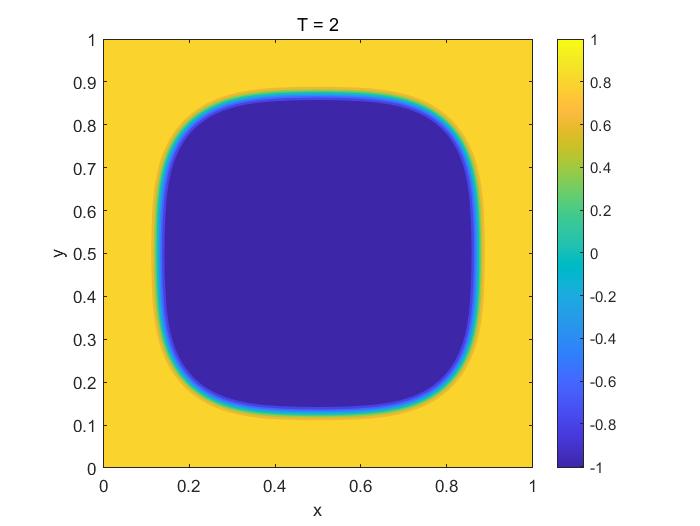}
    \includegraphics[width=0.23\linewidth]{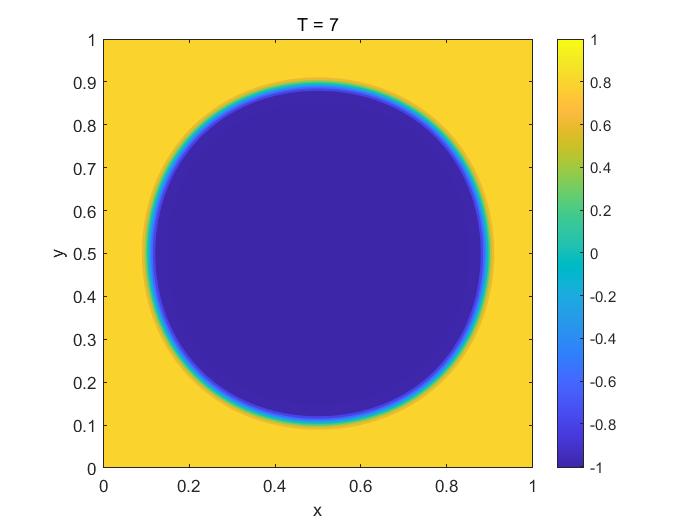}\\
    \hspace{-0.5cm} (A) $t=0.1$ \hspace{1.3cm} (B) $t=0.5$ \hspace{1.4cm} (C) $t=2$ \hspace{1.6cm} (D) $t=7$
    \caption{The contour plot of $\phi$ for the first example at times t=0.1, 0.5, 2, 7.}\label{ex1_phi_t}
\end{figure}

\begin{figure}[htbp]
\centering
\subfloat[mass evolution]{\label{fig:mass_ex1}\includegraphics[scale=0.11]{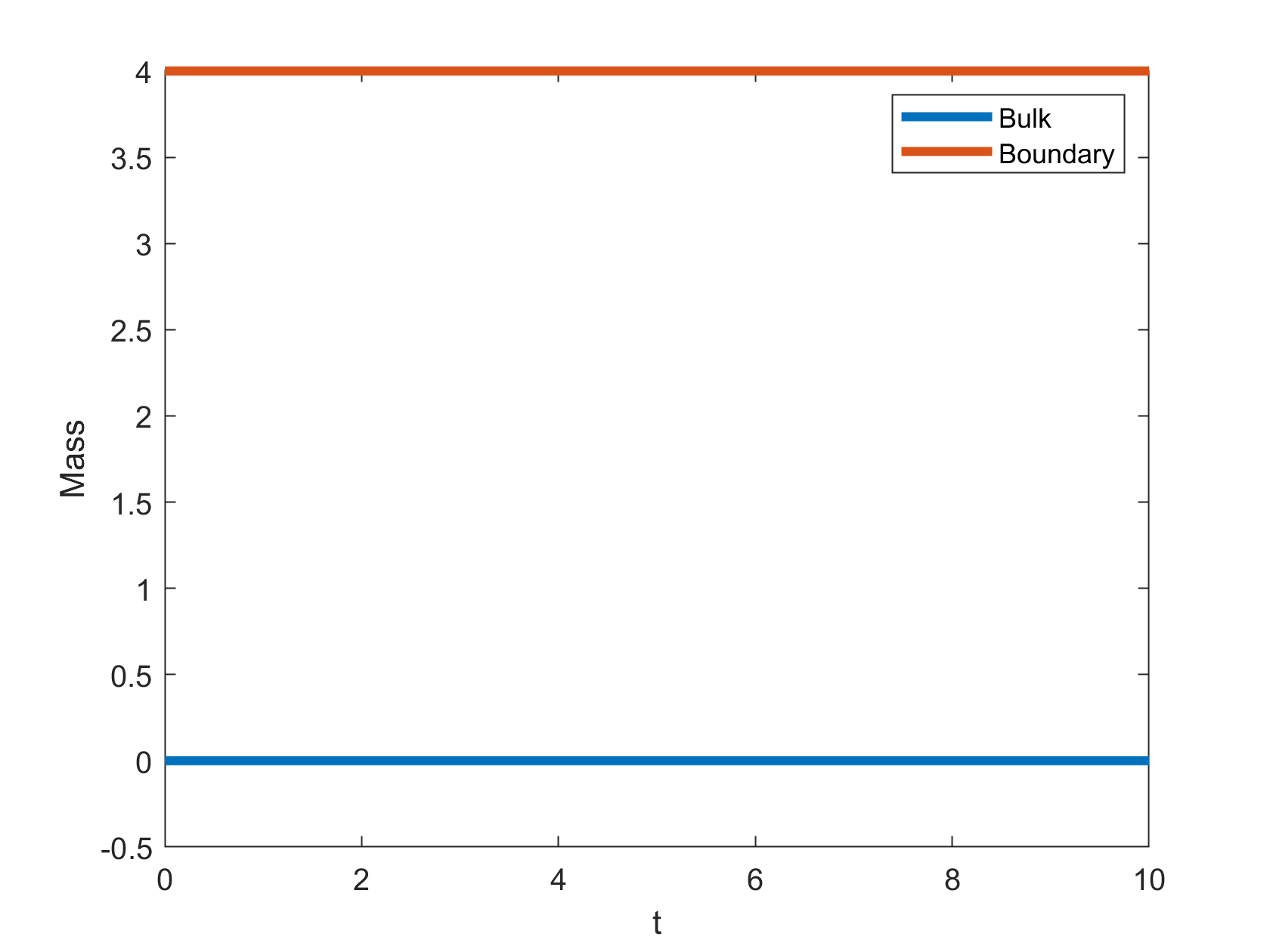}}
\subfloat[energy decay]{\label{fig:energy_ex1}\includegraphics[scale=0.11]{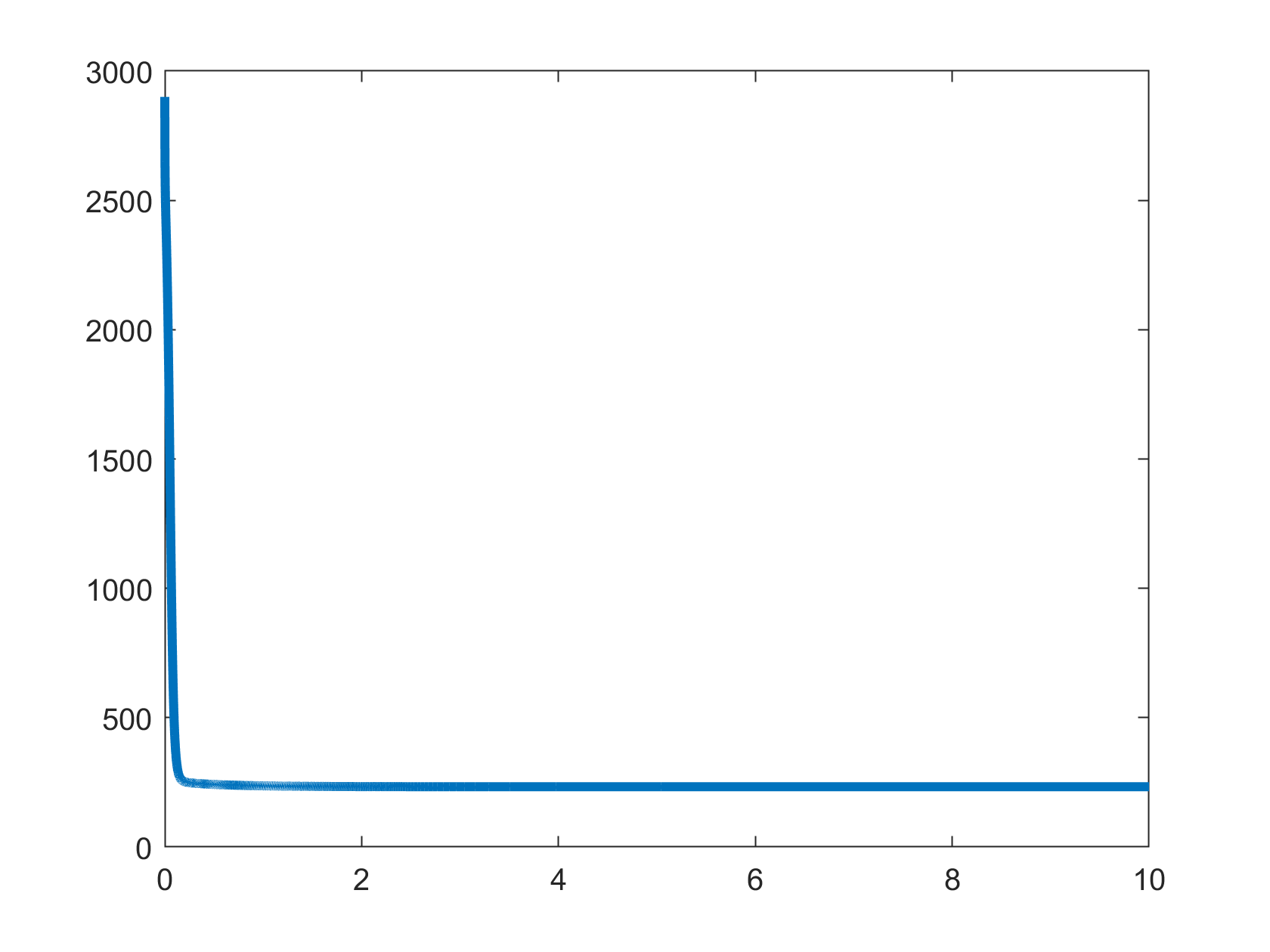}}
\caption{(A): Mass evolution over time in the bulk (blue line) and on the boundary (red line) for the first numerical example.
 (B): Energy decay over time. }
 \label{num_ex1}
\end{figure}

\subsection{Example 5.2}
In the second example, we choose the initial state as follows:
\begin{equation}
    \phi_0({\bf{x}}) = \begin{cases}
        1,  & {\bf{x}} \in \Omega_1,\\
        -1, & {\bf{x}} \in \Omega \backslash \Omega_1.
    \end{cases}
\end{equation}
where $\Omega = [0,1]\times [0,1], \Omega_1 = [0.25,0.75]\times [0,0.5].$

Figure \ref{ex2_phi_t} displays the evolution of $\phi$ at different times $T=0.1, 1, 3 $ and $7$, respectively. A square-shaped 1-phase region appears initially along one boundary of the domain, and it evolves into a circular shape at the boundary in the steady state finally. 
Figure \ref{num_ex2}(A) demonstrates the conservation of mass in both the bulk and on the boundary, while Figure \ref{num_ex2} (B) depicts the decay of the total energy over time.

 \begin{figure}[H]
    \centering
    \includegraphics[width=0.23\linewidth]{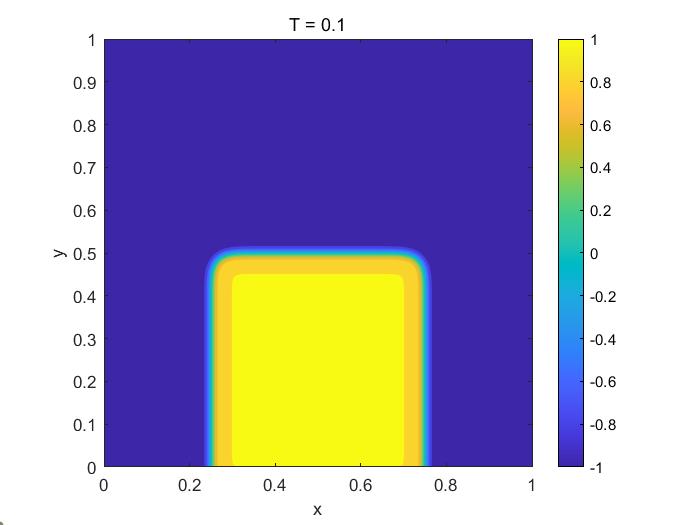}
    \includegraphics[width=0.23\linewidth]{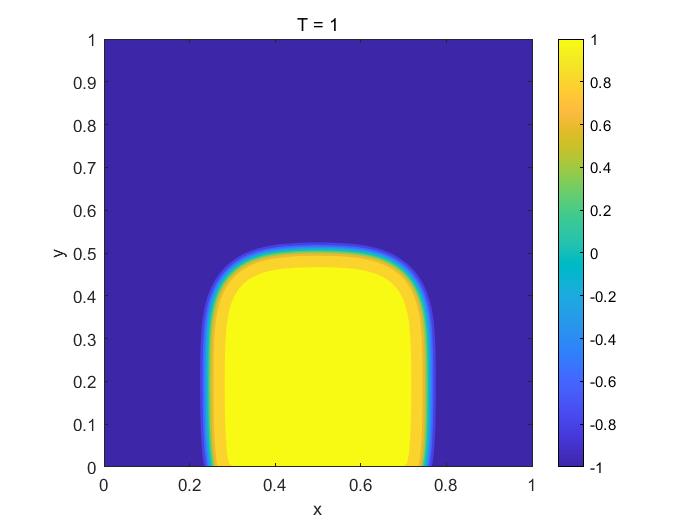}
    \includegraphics[width=0.23\linewidth]{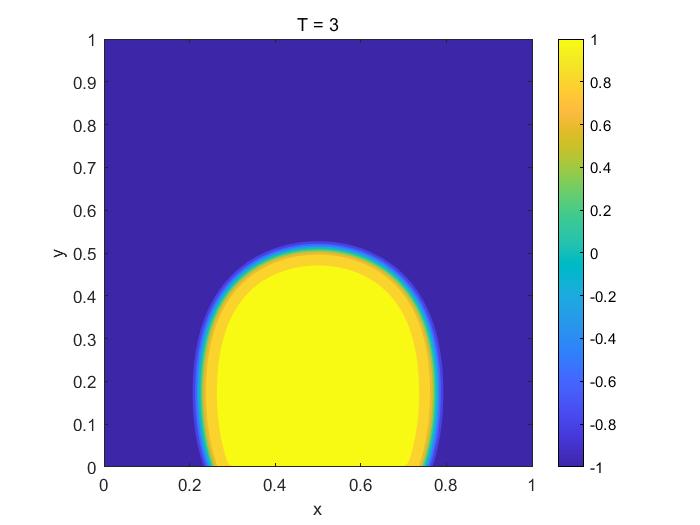}
    \includegraphics[width=0.23\linewidth]{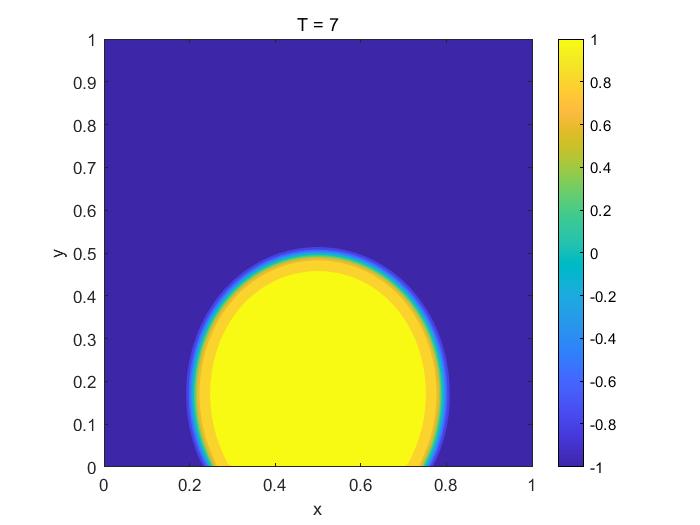}\\
    \hspace{-0.5cm} (A) $t=0.1$ \hspace{1.3cm} (B) $t=1$ \hspace{1.4cm} (C) $t=3$ \hspace{1.6cm} (D) $t=7$
    \caption{The contour plot of $\phi$ for the second example at times t=0.1, 1,  3, 7.}\label{ex2_phi_t}
\end{figure}

\begin{figure}[htbp]
\centering
\subfloat[mass evolution]{\label{fig:mass_ex2}\includegraphics[scale=0.11]{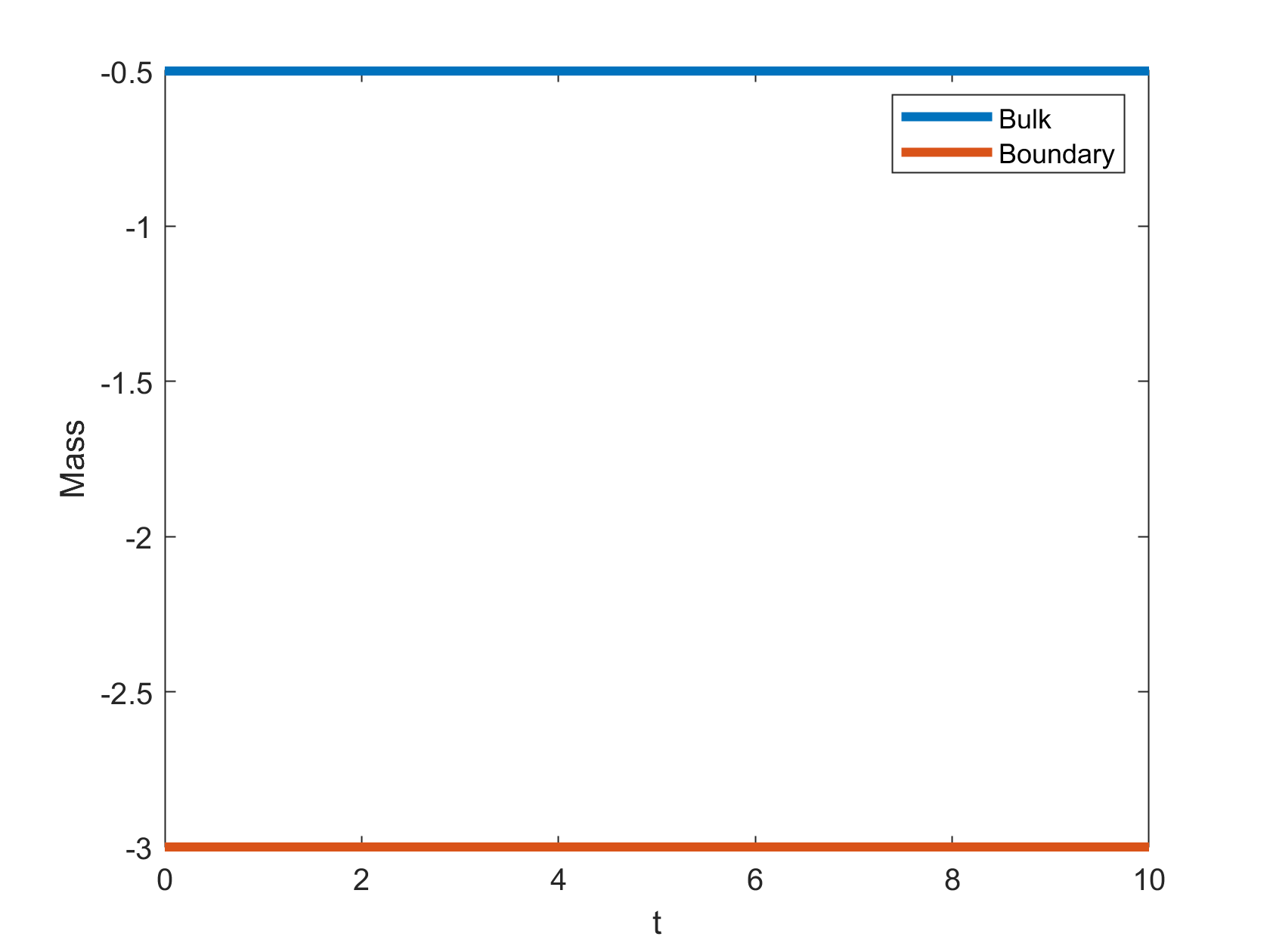}}
\subfloat[energy decay]{\label{fig:energy_ex2}\includegraphics[scale=0.2]{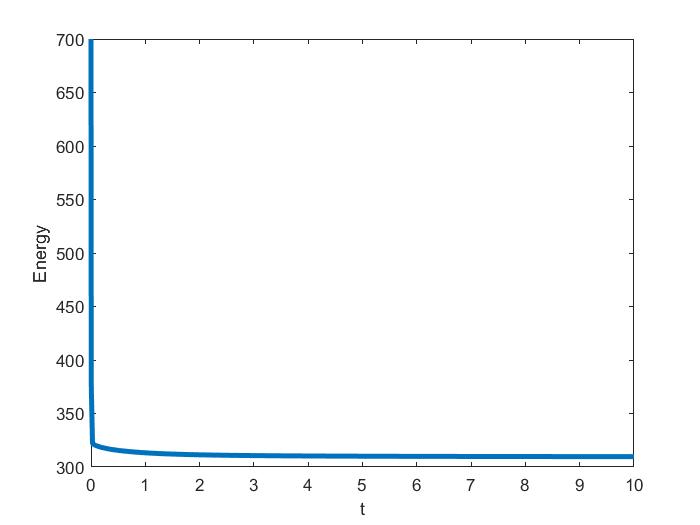}}
\caption{(A): Mass evolution with time in the bulk (blue line) and on the boundary (red line) for the second numerical example.
 (B): Energy decay over time.  }
 \label{num_ex2}
\end{figure}

\subsection{Example 5.3}
In the third example, the initial  state is:
\begin{equation}
    \phi_0(x,y) = 0.3+0.01*\cos{(6\pi x)}\cos{(6\pi y)},
\end{equation}
where $(x,y) \in \Omega = [0,1]\times [0,1].$
We show the evolution of $\phi$ at times $T=0.1, 0.5, 2, 7$ in Figure \ref{ex3_phi_t}. Initially, the $\phi$ field forms five distinct separation regions (see Figure \ref{ex3_phi_t} (A)), distributed periodically along the $x-$ and $y-$ directions within the domain. As time progresses, the central separation region vanishes, and the remaining four regions become more pronounced, aligning with the four boundaries of the domain (see Figure \ref{ex3_phi_t}(D)). Simultaneously, the area inside the boundaries gradually evolves into the 1-phase. 
Figure \ref{num_ex3} demonstrates the mass conservation and energy decay over time. 

 \begin{figure}[H]
    \centering
    \includegraphics[width=0.23\linewidth]{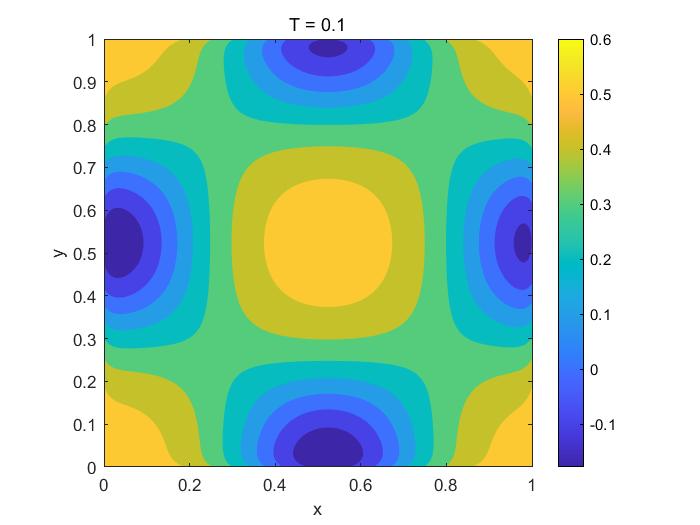}
    \includegraphics[width=0.23\linewidth]{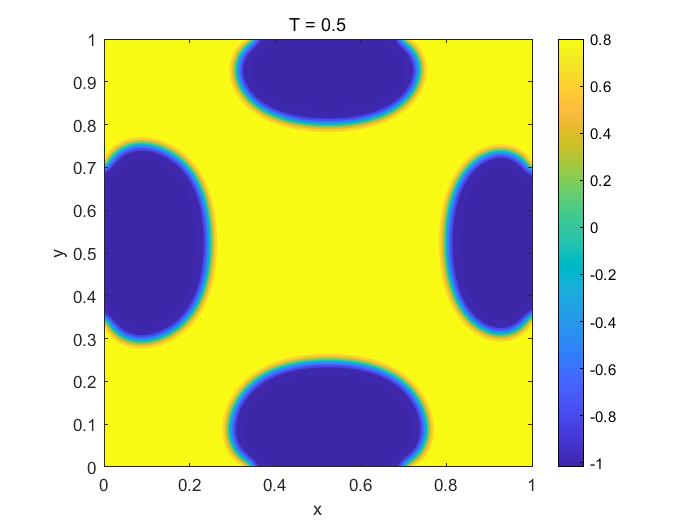}
    \includegraphics[width=0.23\linewidth]{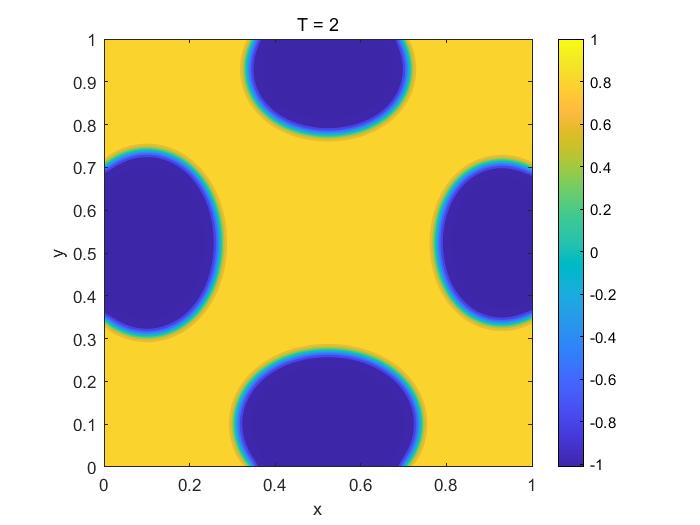}
    \includegraphics[width=0.23\linewidth]{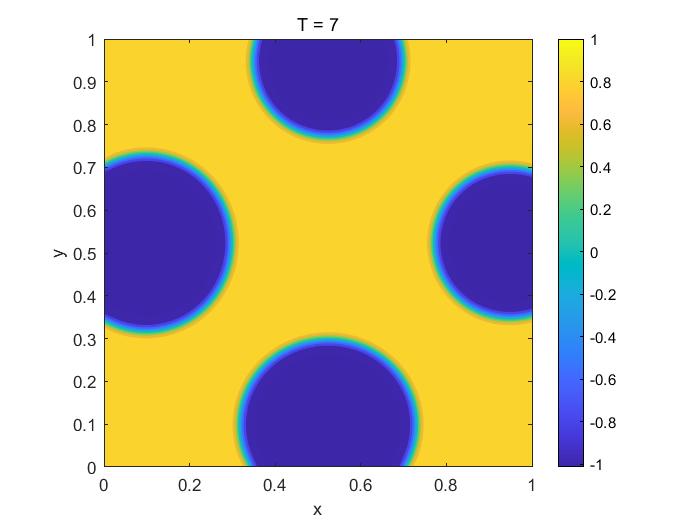}\\
    \hspace{-0.5cm} (A) $t=0.1$ \hspace{1.3cm} (B) $t=0.5$ \hspace{1.2cm} (C) $t=2$ \hspace{1.5cm} (D) $t=7$
    \caption{The contour plot of $\phi$ for the third example at times t=0.1, 0.5, 2, 7.}\label{ex3_phi_t}
\end{figure}

\begin{figure}[htbp]
\centering
\subfloat[mass evolution]{\label{fig:mass_ex3}\includegraphics[scale=0.11]{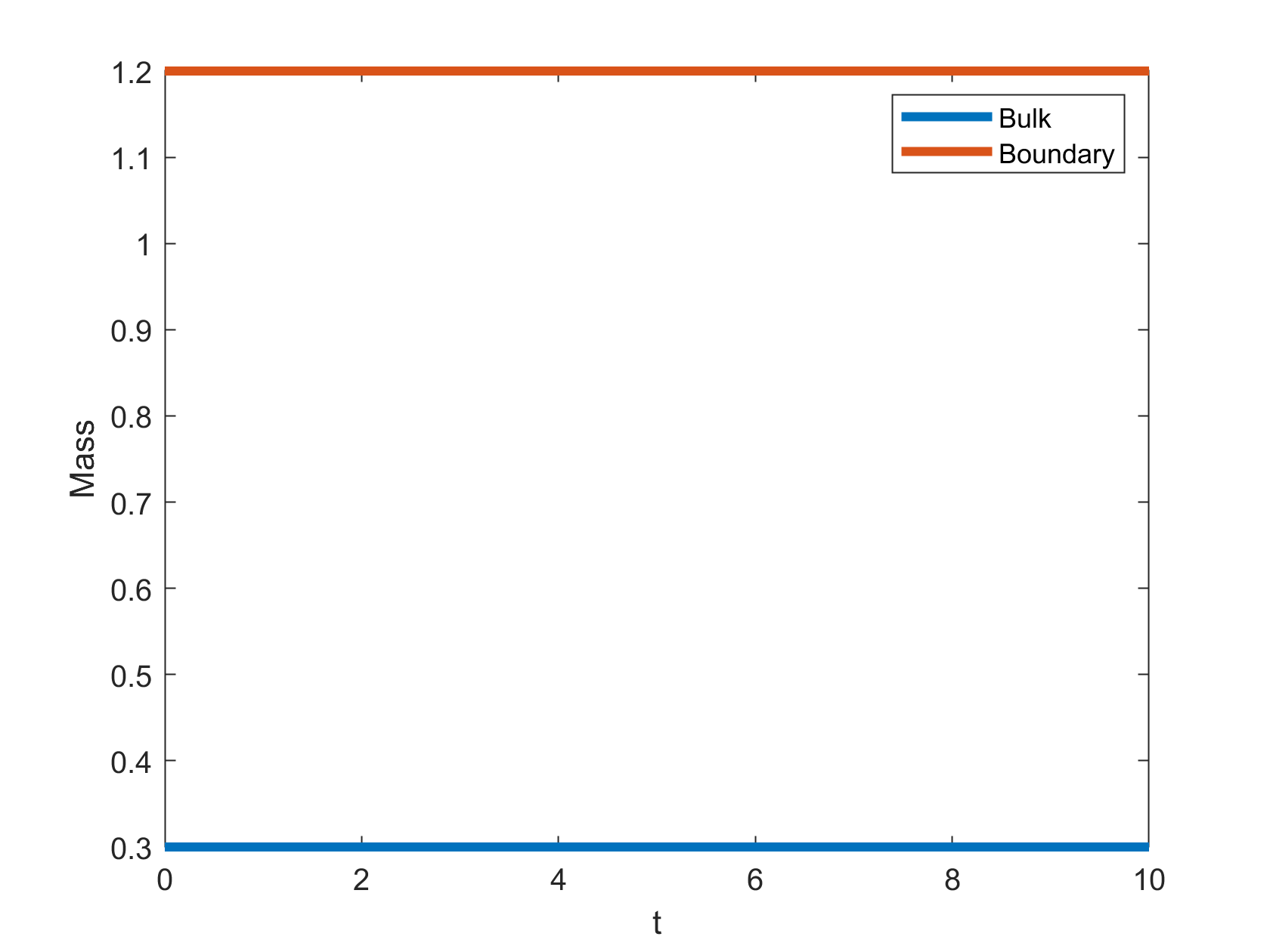}}
\subfloat[energy decay]{\label{fig:energy_ex3}\includegraphics[scale=0.2]{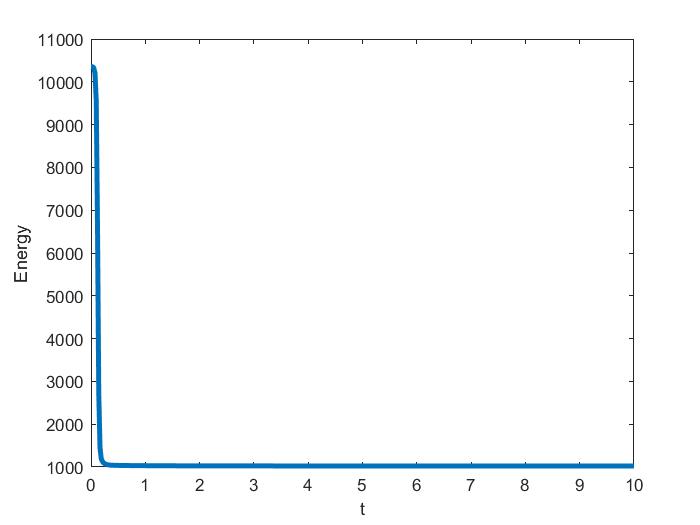}}
\caption{(A): Mass evolution with time in the bulk (blue line) and on the boundary (red line) for the third numerical example.
 (B): Energy decay with time.  }
 \label{num_ex3}
\end{figure}

\subsection{Example 5.4}
In the fourth example, the surface potential $G(\phi)$ is taken as the potential in moving contact line problems:
\begin{align*}
    G(\phi) = -\frac{\Tilde\gamma}{2} \cos(\theta_s) \sin(\frac{\pi}{2}\phi).
\end{align*}
Here, the domain size is $\Omega = [0,1]\times [0,1]$, and we consider two cases for $\theta_s$: $\theta_s = 30^{\circ}$ and $\theta_s = 150^{\circ}$. The initial conditions for both cases are the same, as shown in Figure \ref{phi0_mov}. In these two cases, we plot the evolution of $\phi$ at various times $T=1, 3, 5 $ and $10$, as shown in Figure \ref{fig:MCL1}. The top four states correspond to the case $\theta_s = 30^\circ$, and the bottom four states correspond to $\theta_s = 150^\circ$. It is clear that the steady-state angles differ between the two cases. Figure \ref{MCL30} and Figure \ref{MCL150} show the mass conservation and energy decay for the two cases $\theta_s = 30^{\circ}$ and $\theta_s = 150^{\circ}$, respectively.

\begin{figure}[htbp]
\centering
\includegraphics[scale=0.2]{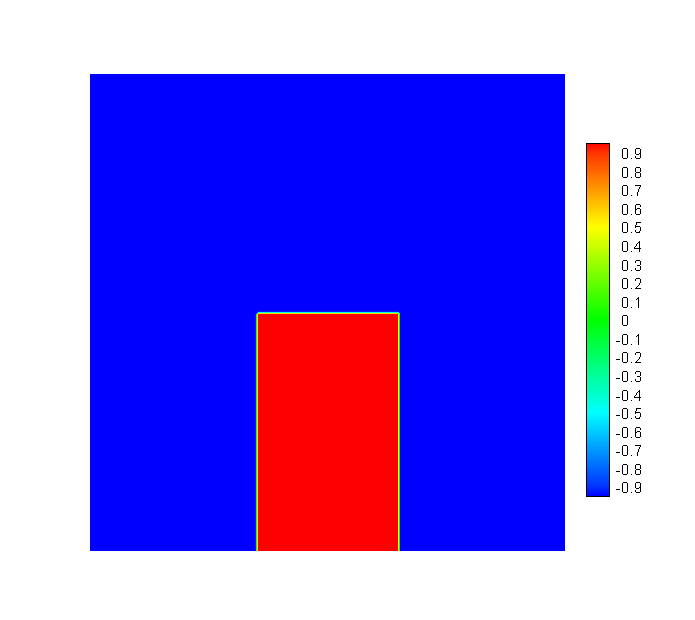}
\caption{Initial state for the moving contact line case.}\label{phi0_mov}
\end{figure}

\begin{figure}[htbp]
	\centering
	\includegraphics[width=0.2\linewidth, height=0.11\textheight]{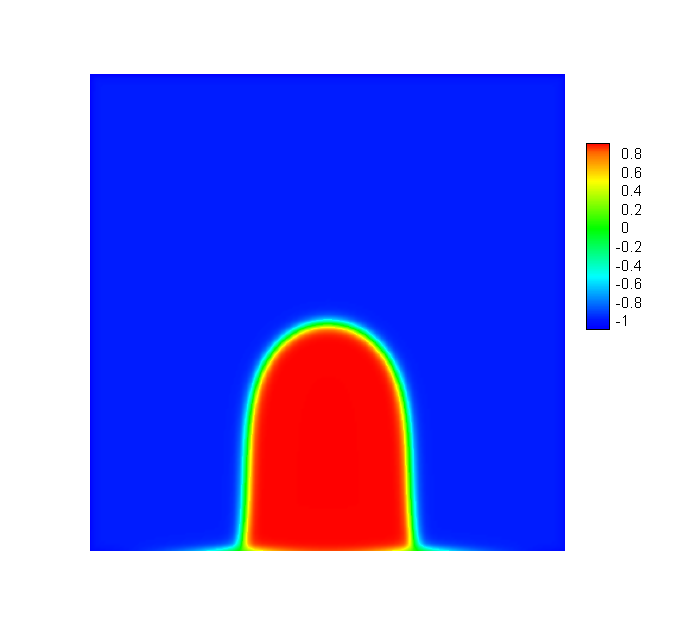}
	\includegraphics[width=0.2\linewidth, height=0.11\textheight]{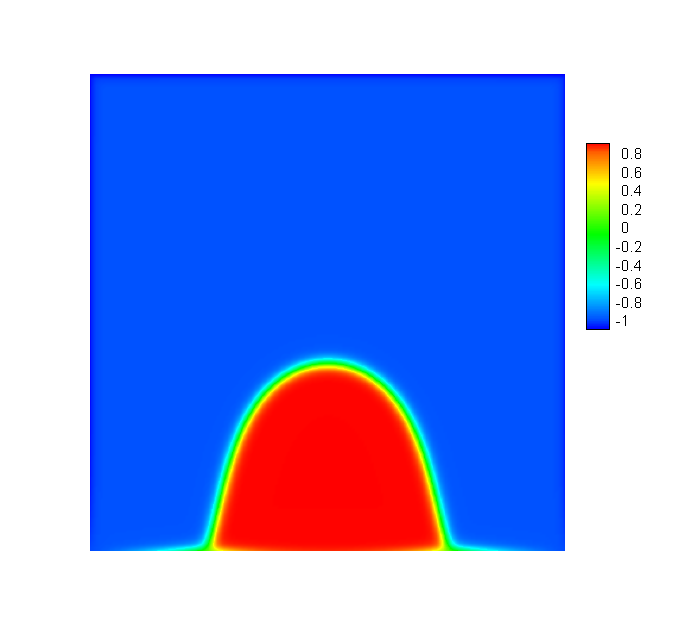}\includegraphics[width=0.2\linewidth, height=0.11\textheight]{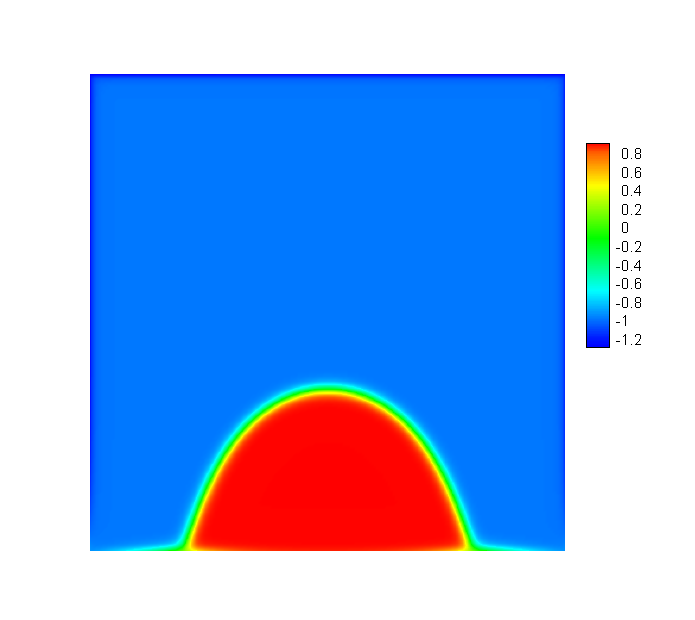} 
	\includegraphics[width=0.2\linewidth, height=0.11\textheight]{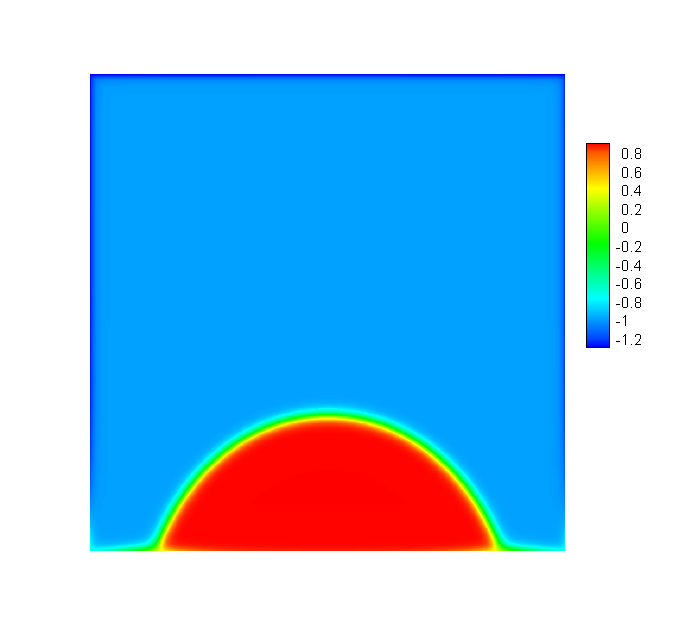} \\
	\includegraphics[width=0.2\linewidth, height=0.11\textheight]{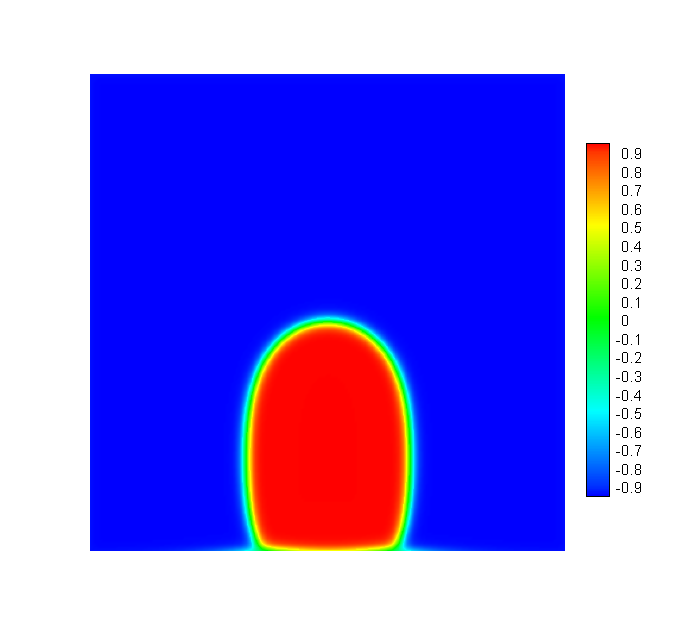} 
	\includegraphics[width=0.2\linewidth, height=0.11\textheight]{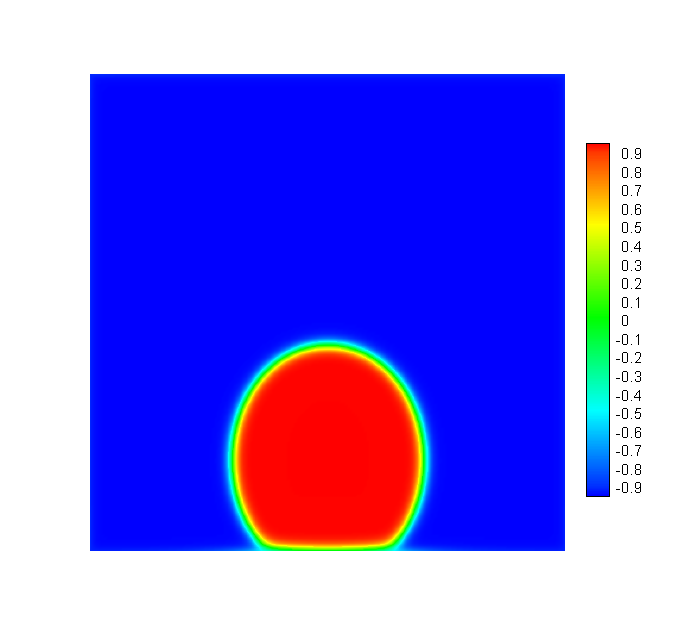}
    \includegraphics[width=0.2\linewidth, height=0.11\textheight]{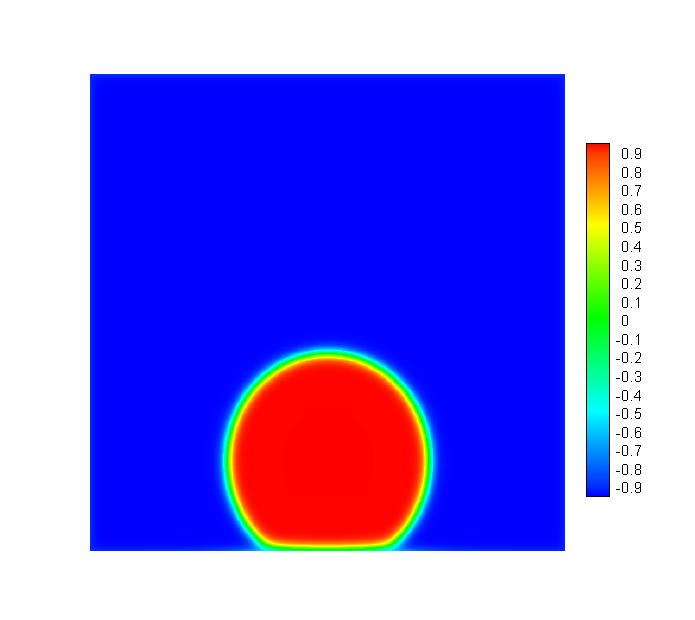} 
	\includegraphics[width=0.2\linewidth, height=0.11\textheight]{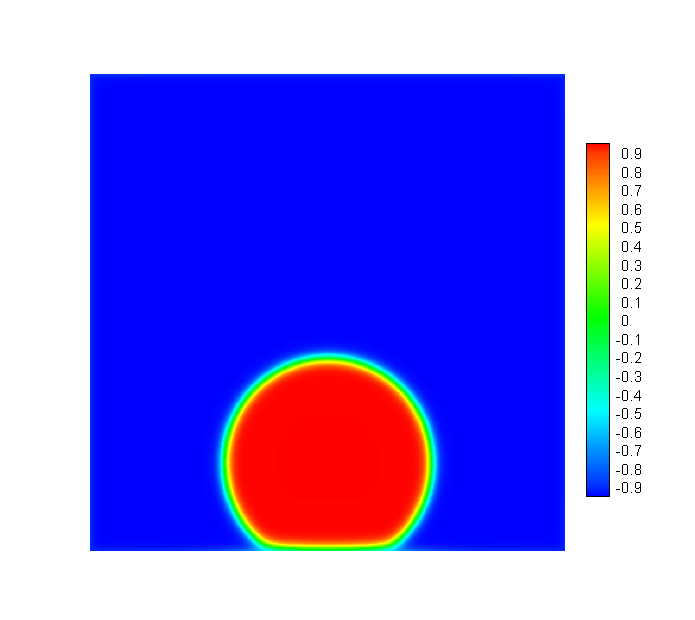} \\
    \hspace{-0.2cm} (A) $t=1$ \hspace{1.2cm} (B) $t=3$ \hspace{1.2cm} (C) $t=5$ \hspace{1.1cm} (D) $t=10$
	\caption{Snapshots of the numerical approximation at $t= 1$, $3$, $5$, and $10$. Top: $\theta_s=30^{\circ}$; Bottom: $\theta_s=150^{\circ}$.}
    \label{fig:MCL1}
\end{figure}

\begin{figure}[htbp]
\centering
\subfloat[mass evolution]{\label{fig:massMCL30a}\includegraphics[scale=0.11]{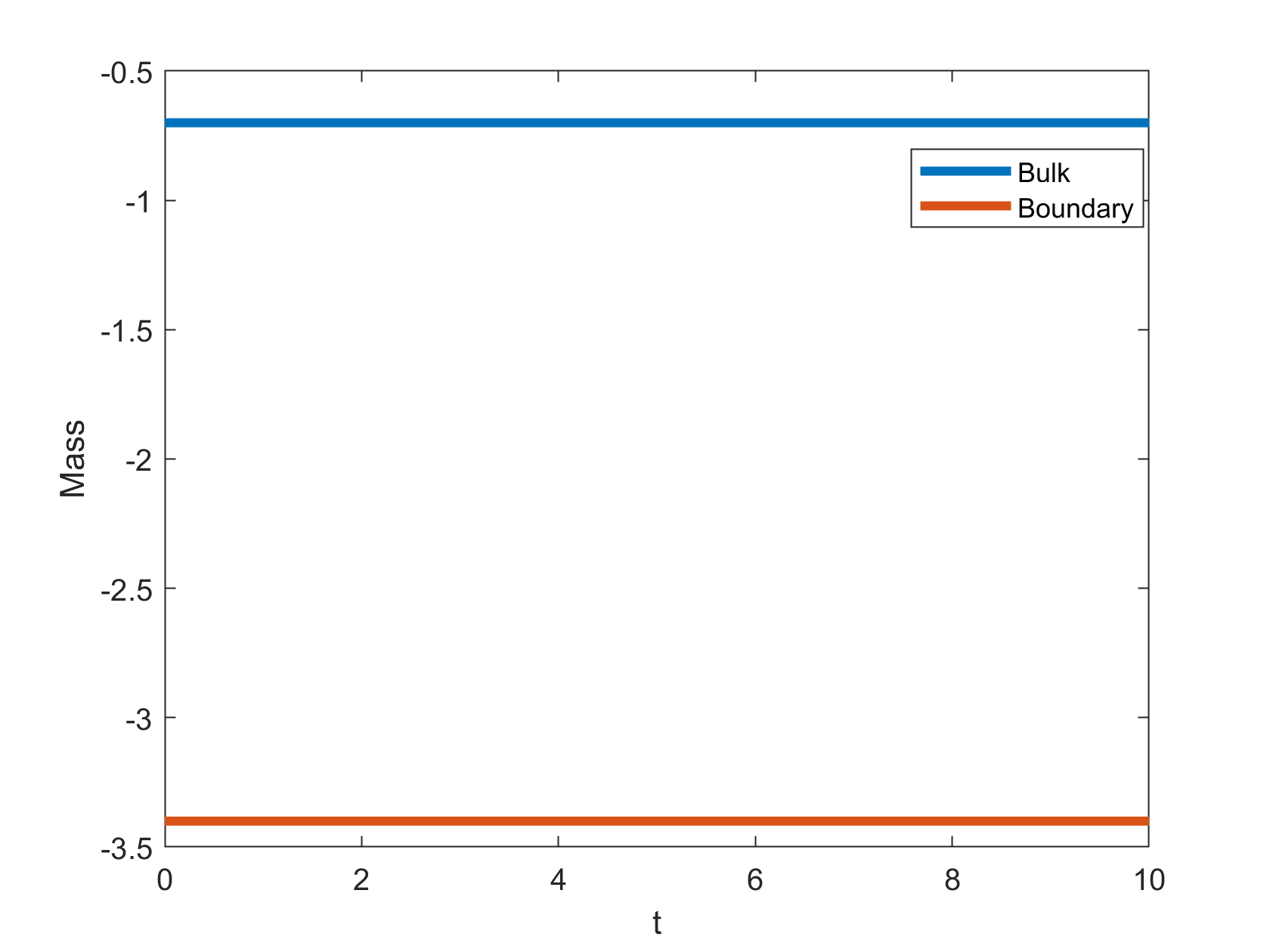}}
\subfloat[energy evolution]{\label{fig:massMCL30b}\includegraphics[scale=0.11]{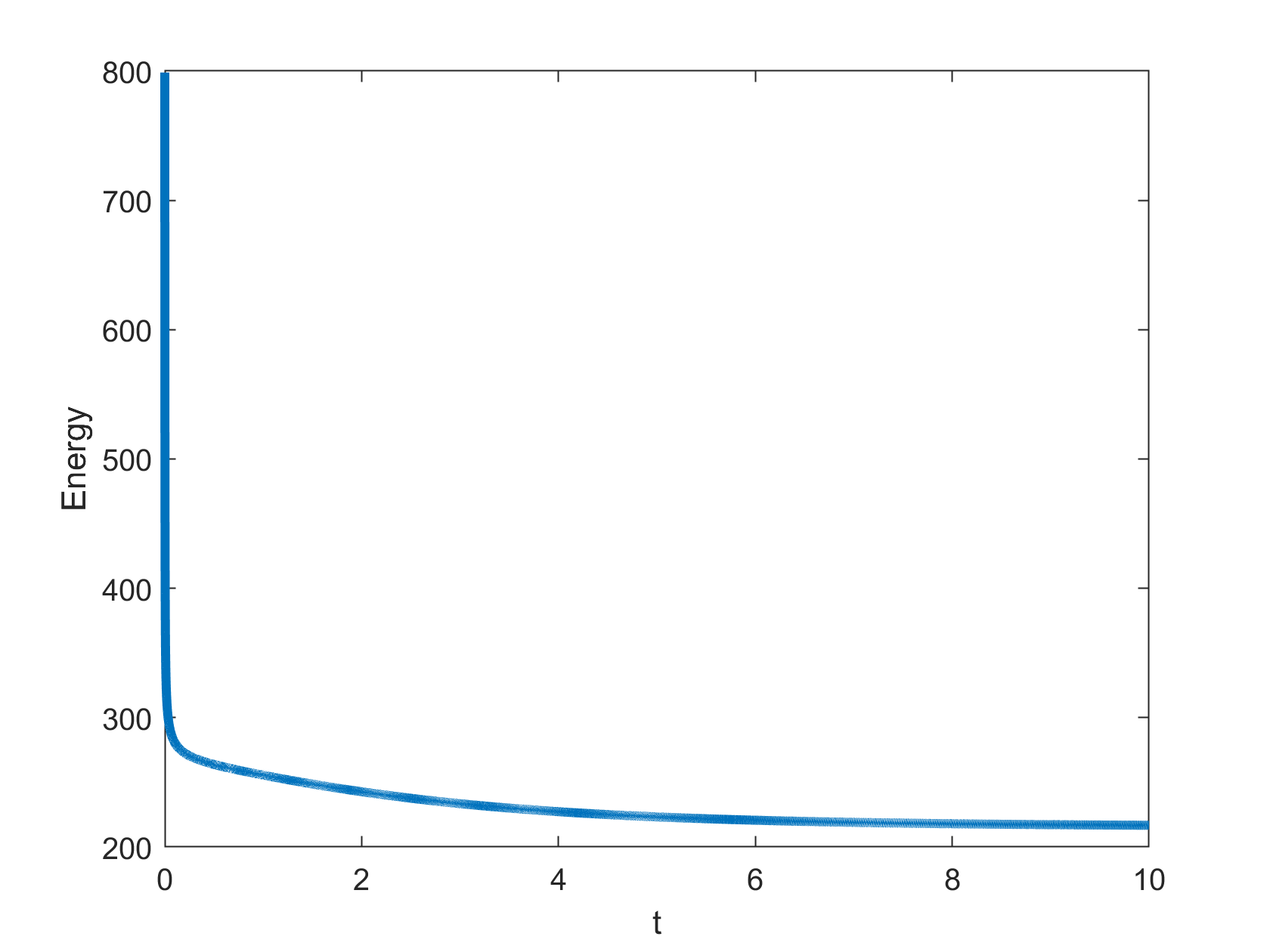}}
\caption{(A): Mass evolution over time in the bulk (blue line) and on the boundary (red line) for the case $\theta_s=30^{\circ}$.
 (B): Energy evolution over time for the case $\theta_s=30^{\circ}$.}
 \label{MCL30}
\end{figure}

\begin{figure}[htbp]
\centering
\subfloat[mass evolution]{\label{fig:massMCL150a}\includegraphics[scale=0.11]{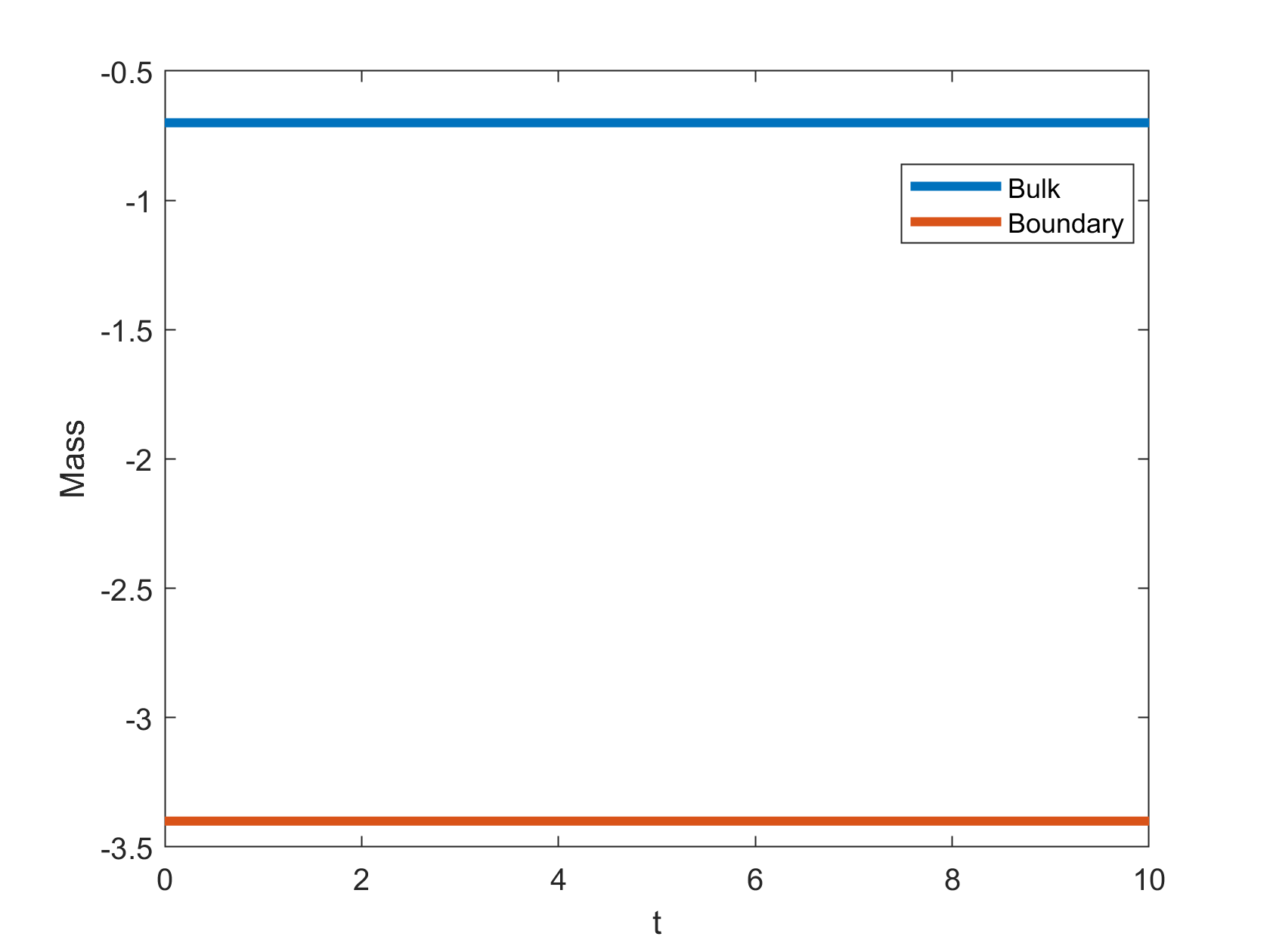}}
\subfloat[energy evolution]{\label{fig:massMCL150b}\includegraphics[scale=0.11]{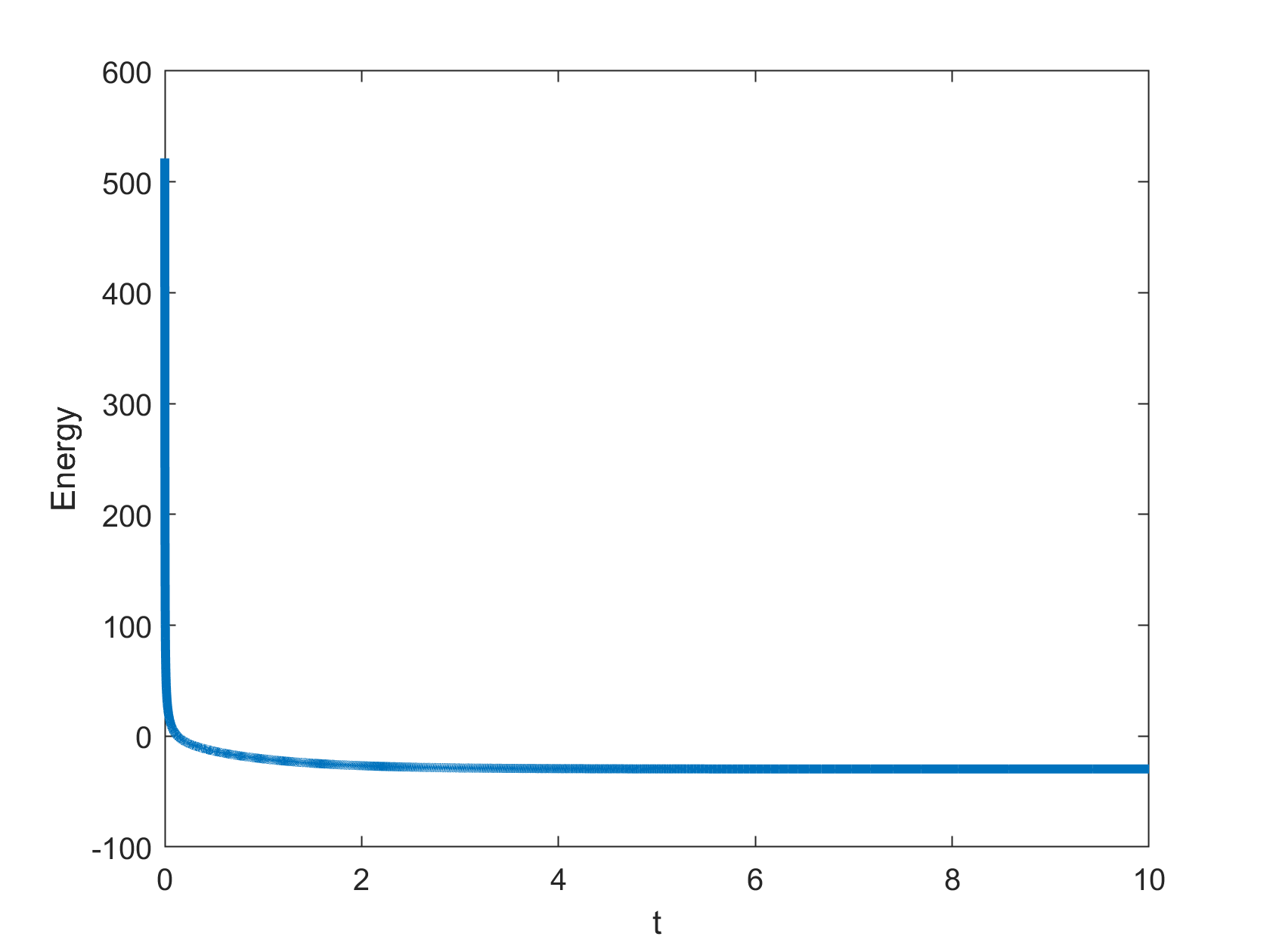}}
\caption{(A): Mass evolution over time in the bulk (blue line) and on the boundary (red line) for the case $\theta_s=150^{\circ}$.
 (B): Energy evolution over time for the case $\theta_s=150^{\circ}$.}
 \label{MCL150}
\end{figure}


\section{Conclusion}
\label{Conclusion}

In this work, we proposed a projected method for solving the Allen-Cahn (AC) equation with AC-type dynamic boundary conditions in order to calculate the steady states of the Cahn-Hilliard (CH) equation with CH-type dynamic boundary conditions. By introducing an orthogonal projection operator onto the confined subspace that satisfies mass conservation, the calculation of steady states in the $H^{-1}$ metric  is transformed equivalently to the calculation of steady states in the $L^2$ metric via projection.  This approach significantly reduces computational cost, as it
leads to a lower-order spatial derivative equation compared to directly working in the $H^{-1}$ metric.  More importantly, it avoids the need for computing the $\Delta^{-1}$ operator, which is  typically computationally expensive.
Additionally, we constructed an unconditionally energy-stable, time-discrete scheme using the convex splitting method.  This ensures stability and accuracy in the numerical solution. 
This projection idea has been applied successfully for the saddle point calculation of the CH equation with periodic boundary condition.

\section*{Acknowledgement}
Shuting Gu acknowledges the support of NSFC, China 11901211 and the Natural Science Foundation of Top Talent of SZTU, China GDRC202137. Rui Chen acknowledges the support of NSFC, China 12001055.

\newpage

\bibliography{CVXIMF,gad,ms,MsGAD,my, ProjAC_DBC}
 
\bibliographystyle{siamplain} 

\end{document}